\documentclass[11pt]{amsart}

\usepackage[colorlinks, linkcolor=blue,  citecolor=blue, urlcolor=blue]{hyperref}%
\usepackage{url}
\usepackage{xspace}
\usepackage{verbatim}
\usepackage{alltt}
\usepackage{graphicx}
\usepackage{subfigure}
\usepackage{enumerate}
\graphicspath{{figures/}}
\usepackage{pdfsync}


\newcommand{\OS}{X}
\newcommand{\OT}{Y}

\newcommand{\eps}{\epsilon}
\newcommand{\Fm}{F_M}
\newcommand{\Fp}{F_P^\epsilon}
\newcommand{\Fe}{F^\epsilon}
\newcommand{\Fa}{F_A}
\newcommand{\Dt}{\mathcal{D}}
\newcommand{\G}{\mathcal{G}}

\usepackage{dsfont}

\newcommand{\da}{d\alpha}

\newcommand{\MA}{{Monge-Amp\`ere}\xspace}
\newcommand{\M}{\mathbb{M}}
\newcommand{\Hd}{H}

\newtheorem{theorem}{Theorem}[section]
\newtheorem*{theoremx}{Theorem}

\newtheorem{lemma}[theorem]{Lemma}

\newtheorem{definition}[theorem]{Definition}
\theoremstyle{remark}
\newtheorem*{remark}{Remark}
\newtheorem{example}[theorem]{Example}

\newcommand{\norm}[1]{\Vert#1\Vert}

\DeclareMathOperator{\trace}{trace}

\DeclareMathOperator{\dist}{dist}
\newcommand{\grad}{\nabla}
\newcommand{\bq}{\begin{equation}}
\newcommand{\eq}{\end{equation}}
\newcommand{\R}{\mathbb{R}}
\newcommand{\Rd}{\R^d}
\newcommand{\Rn}{\R^n}
\newcommand{\e}{\epsilon}
\newcommand{\bO}{\mathcal{O}}

\newcommand{\blue}[1]{{\color{blue}{#1}}}


\graphicspath{{../figures/}}

\begin{document}

\title[Viscosity Solutions for {M}onge-{A}mp\`ere]{A viscosity solution approach to the {M}onge-{A}mp\`ere formulation of the Optimal Transportation Problem}

\author{Jean-David Benamou}
\address{INRIA, Domaine de Voluceau, BP 153 le Chesnay Cedex, France}
\email{jean-david.benamou@inria.fr}
\author{Brittany D. Froese}
\address{Department of Mathematics, University of Texas at Austin, 2515 Speedway, 
Austin, TX 78712}
\email{bfroese@math.utexas.edu}

\author{Adam M. Oberman}
\address{Department of Mathematics, McGill University, 805 Sherbrooke Street West
Montreal, Quebec H3A0B9
Canada}
\email{adam.oberman@mcgill.ca}
\urladdr{www.sfu.ca/~aoberman}

%

\date{\today}

\begin{abstract} 
In this work we present a numerical method for the Optimal Mass Transportation problem.  Optimal Mass Transportation (OT) is an active  research field in mathematics.
 It has recently led to significant theoretical results as well as applications in diverse areas.  Numerical solution techniques for the OT problem remain underdeveloped. 
The solution is obtained by solving  the second boundary value problem for the  \MA equation, a fully nonlinear elliptic partial differential equation (PDE). 
Instead of  standard boundary conditions the problem has global state constraints.  These are reformulated as a tractable local PDE.
We give a proof of convergence of the numerical method, using the theory of viscosity solutions. Details of the implementation and a fast solution method are provided in the companion paper \cite{SBVP_Num}.  \end{abstract}

\keywords{
Optimal Transportation, Monge Amp\`ere equation, Numerical Methods, Finite Difference Methods, Viscosity Solutions, Monotone Schemes, Convexity, Fully Nonlinear Elliptic Partial Differential Equations. \\
MSC : 35J96,65l12,49M25.}

\maketitle


\section{Introduction}

The Optimal Transportation (OT) problem is a simply posed mathematical problem which dates back more than two centuries.  It has led to significant results in probability, analysis, and  Partial Differential Equations (PDEs), among other areas.  
 The core theory is by now well established:  good presentations are available  the textbook~\cite{Villani} and the survey~\cite{EvansSurvey}.  The subject continues to find new relevance,  in particular to geometry and curvature (see~\cite{VillaniBook2}), and to dissipative evolutionary equations (see~\cite{MR1617171,MR1842429} and the resulting literature).   Application areas include image registration, mesh generation,  reflector design,  astrophysics, and meteorology (see~\cite{Villani} for references).  

This article presents a numerical method for the Optimal Transportation (OT) problem with quadratic costs.  
It is formally equivalent to the second boundary value problem for the \MA equation, a fully nonlinear elliptic partial differential equation (PDE). 
In this setting the equation lacks standard boundary conditions: 
instead there are state constraints on the gradient of the solution.  Because these state constraints are not easily enforced, we reformulate them using the distance function to the target set, which results in a local but implicit Hamilton-Jacobi PDE at the boundary.
Our  main result is a proof of convergence of solutions of the finite difference approximation to the PDE, using the theory of viscosity solutions.


Going from a theoretically convergent algorithm to the simplest and most efficient numerical implementation requires  more explanation, a different emphasis from the mathematical aspects of the convergence proof.   For this reason, we present the details of the implementation in a companion paper,~\cite{SBVP_Num}.   
Basic computational results are presented here as an illustration.   Additional computational results, including some which are not covered by the theory (e.g. non-convex targets, Alexsandroff solutions), can be found in \cite{SBVP_Num}.

\subsection{Optimal Transportation and the \MA PDE}
\label{mak}
The OT problem is described as follows.  Suppose we are given two probability densities, where 
\begin{equation} 
\label{DensityAss}
\begin{aligned}
\rho_\OT(y) &\text{ is Lipschitz continuous with convex support } \OT
\\
\rho_\OS(x) &\text{ is bounded with support } \OS,
\\
\end{aligned}
\end{equation} 
and $\OS, \OT \subset \Rn$ are bounded and open. 
\newcommand{\fv}{T}
Consider the set, $\M$,  of maps which rearrange the measure  $\rho_\OT$ into the measure $\rho_\OS$,
\begin{equation}
\label{jacob}
{\M} = \{ \fv : \OS \mapsto \OT, \,\,\, \rho_\OT(\fv) \, \det(\nabla \fv) = \rho_\OS \}. 
\end{equation}
The OT problem,  in the case of quadratic costs, is given by
\begin{equation} 
\label{mkp2} 
 \inf_{\fv \in \M} \dfrac{1}{2} \int_{ \OS }  \|x-\fv (x) \|^{2} \rho_\OS(x)dx.
\end{equation}
See 
\autoref{fig:ellipse}
for an illustration of the optimal map between ellipses.

General conditions under which the OT problem (\ref{jacob},~\ref{mkp2}) is well-posed are established in~\cite{MR1100809}.
The unique minimizing map, $\fv$, at which the minimum is reached, is the
gradient of a convex  function 
\[
\fv = \grad u, \quad \text{ $u$ convex $:\OS\subset \Rd \to \R$},
\]
which is therefore also unique up to a constant. 
Write 
$D^2 u$ for the Hessian of the function $u$.
Formally substituting $\fv = \grad u$ into~\eqref{jacob} results in the \MA PDE
\bq
\label{MA} \tag{MA}
\det ( D^2 u (x))  = \frac{\rho_\OS(x)}{\, \rho_{\OT} (\nabla u(x))},  \quad \text{ for }  x \in \OS,
\eq
along with the restriction  
\bq\label{convex}\tag{C}
u \text{ is convex}.
\eq
The PDE~\eqref{MA} lacks standard boundary conditions.  However, it is constrained by the 
fact that the gradient map takes  $\OS$ to $\OT$,
\begin{equation} 
\label{BV2}
 \tag{BV2}
\nabla u(\OS) = \OT. 
\end{equation}
The condition~\eqref{BV2} is referred to as the \emph{second boundary value problem for the Monge-Amp\`ere equation} in the literature (see~\cite{MR1454261}).   We will also
 use the term \emph{OT boundary conditions}. \\
\newcommand{\PDE}{(\ref{MA},~\ref{BV2},~\ref{convex})\xspace}

Existing approaches, to solving the OT problem using PDE methods fail to address the condition~\eqref{BV2} for general densities.  
Instead, they  generally limit their applicability to rectangular  density supports  and use either periodic or inhomogeneous Neumann boundary conditions, provided the mass of the target density does not vanish.  

Another way to around this problem is to allow densities to vanish, however this introduces new problems:
Allowing $\rho_\OS$ to vanish causes the the PDE~\eqref{MA} to  become degenerate elliptic.
Allowing $\rho_\OT$ to be discontinuous causes instabilities, since 
 the Lipschitz constant of $\rho_\OT$ appears in gradient descent type of methods.
Smoothing and adding mass to bound either density away from zero changes the optimal map. 
The CFD formulation in \cite{MR1738163} can 
deal with characteristic function densities but its time extended computational domain 
penalizes the computational cost. 

More details and references and given in section 1.5 of \cite{SBVP_Num}.\\

The discrete approximation of the combined problem~\PDE in the generality of 
~\eqref{DensityAss}  is our present topic.

\subsection{Contributions of this work}\label{sec:contribution}

\newcommand{\PDEL}{(\ref{MA},~\ref{OT1},~\ref{convex})\xspace}

In order to enforce  the state constraint (\ref{BV2}) for all possible target sets $\OT$, we use the  signed distance function, $d$, to the boundary of the  domain $\OT$.   
We replace ~\eqref{BV2} by  a Hamilton-Jacobi equation on the boundary:
\bq \label{OT1}\tag{HJ}
\Hd(\grad u(x)) = d(\grad u(x), \OT) = 0, \quad \text{ for } x \in \partial \OS.
\eq
From the convexity of  the domain, $\OT$, the signed distance function, $d$, is also convex.  This implies convexity of $\Hd$.   Using convexity of the solution, 
 $u$, we show that~\PDE is equivalent to~\PDEL.
However, the latter system is more amenable to computations, because it involves a local PDE (a convex Hamilton-Jacobi equation) instead of a global condition on the mapping.  In full generality, any strictly convex Hamiltonian function which vanishes on $\partial \OT$  could be used. This corresponds to the notion of \emph{defining function} introduced by Delano\"e and Urbas in \cite{MR1136351,MR1454261}.
The choice of the signed distance function is made for simplicity.

We recall  it has become common practice to use a distance function to determine a set, as is the case in the level set method.  In {that} case,  a Hamilton-Jacobi equation {is used} to \emph{solve for} the distance function.  What we are doing here is the converse, using the distance function to a {particular} set to \emph{represent} the nonlinear PDE operator.\\

Using this reformulation, we are then able to build approximations and prove convergence of the approximations in the setting of viscosity solutions (which include the regular case).  Our main result, \autoref{thm:convergence} below, is restated here.

\begin{theoremx}[Convergence]
Let $u$ be the unique convex viscosity solution of~(\ref{MA},~\ref{BV2},~\ref{convex}). 
Suppose that $\rho_\OT$ is Lipschitz continuous with convex support, as in the assumption on the densities~\eqref{DensityAss}.
Let $u^{h,d\theta,\da}$ be a solution of the finite difference scheme~\eqref{eq:scheme}.  Then $u^{h,d\theta,\da}$ converges uniformly to $u$ as $h,d\theta,\da\to0$. 
\end{theoremx}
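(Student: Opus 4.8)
I would prove convergence by the standard Barles–Souganidis framework: a monotone, stable, and consistent scheme converges to the unique viscosity solution. The three ingredients that need to be assembled are (i) a well-posedness / comparison principle statement for the reformulated PDE system \PDEL, so that there is a unique continuous viscosity solution $u$ and one can speak of convergence to *it*; (ii) the equivalence (already asserted in the excerpt, from convexity of $u$ and of the distance function $d$) between the original OT boundary-value problem \PDE and the local system \PDEL; and (iii) the three scheme properties. The heart of the matter is that the numerical solution $u^{h,d\theta,\da}$ is built to be consistent with the \MA operator in the interior (via a wide-stencil, directional discretization parametrized by the angle mesh $d\theta$) and with the convex Hamilton–Jacobi equation $\Hd(\grad u)=d(\grad u,\OT)=0$ on $\partial\OS$, with the convexity constraint \eqref{convex} enforced by taking a max over directions (the $\da$ discretizing the gradient/test directions). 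Monotonicity of such schemes for \MA and for convex HJ equations is by now classical (Oberman-type wide-stencil schemes), so I would cite that and verify it for the particular scheme \eqref{eq:scheme}.

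**Key steps, in order.**

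First, I would establish existence and uniqueness of a discrete solution $u^{h,d\theta,\da}$ — typically via a Perron-type argument or a fixed-point/iteration argument using the degenerate-ellipticity (monotonicity) of the scheme, together with a normalization (e.g.\ fixing the solution at one point) since $u$ is only unique up to an additive constant. Second, I would prove uniform stability: a discrete comparison principle plus explicit barrier functions (using the convexity of $\OT$ and boundedness of both supports, and the mass-balance condition implicit in \eqref{DensityAss}) giving $L^\infty$ bounds on $u^{h,d\theta,\da}$ and on its discrete gradient independent of the three mesh parameters; the Lipschitz continuity of $\rho_\OT$ enters here to keep the right-hand side of \eqref{MA} controlled. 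Third, consistency: a Taylor-expansion argument showing that for any smooth test function $\phi$, the scheme applied to $\phi$ converges to the PDE operator, with the wide-stencil directional error $\to 0$ as $d\theta,\da\to0$ and the spatial truncation $\to 0$ as $h\to0$ — including the consistency of the boundary discretization with \eqref{OT1}. Fourth, I would form the half-relaxed limits $\overline u=\limsup{}^*u^{h,d\theta,\da}$ and $\underline u=\liminf{}_*u^{h,d\theta,\da}$, show via monotonicity+consistency that $\overline u$ is a viscosity subsolution and $\underline u$ a supersolution of \PDEL (hence, by the equivalence in the excerpt, of \PDE), invoke the comparison principle to get $\overline u\le\underline u$, conclude $\overline u=\underline u=u$, and upgrade locally uniform convergence of the half-relaxed limits to uniform convergence on the (bounded) closure of $\OS$.

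**Main obstacle.**

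The delicate point is the boundary condition: \eqref{OT1} is an *implicit* Hamilton–Jacobi equation coupling the PDE to the geometry of $\OT$ through $d(\cdot,\OT)$, and viscosity boundary conditions of Hamilton–Jacobi type must be interpreted in the relaxed (possibly-not-satisfied-pointwise) sense. So the real work is: (a) checking that the comparison principle genuinely holds for the \MA equation *with this HJ boundary condition and the convexity constraint* — this is where one must use convexity of $\Hd$ and of the solution crucially, and is likely imported from or adapted to results of Delano\"e–Urbas on defining functions; and (b) verifying at the half-relaxed-limit stage that $\overline u$ and $\underline u$ satisfy the boundary condition in the correct viscosity sense, which requires the boundary part of the scheme to be monotone and consistent *simultaneously* with the interior \MA part, and a careful treatment of test functions touching $\overline u$ or $\underline u$ at a boundary point $x\in\partial\OS$. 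I expect step (a) — the comparison/uniqueness for the reformulated system, making the phrase "the unique convex viscosity solution" meaningful — to be the technical crux, and would structure the proof so that convexity is exploited at every stage (discrete convexity of $u^{h,d\theta,\da}$ enforced by the scheme, passing to convexity of the limit, and convexity of $d$ giving a convex, and hence well-behaved, Hamiltonian).
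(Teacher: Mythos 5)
Your overall plan (Barles--Souganidis: monotone $+$ stable $+$ consistent $\Rightarrow$ convergence, via half-relaxed limits and a comparison principle for the reformulated system \PDEL) is the right philosophy, but it has a genuine mismatch with the scheme actually being analyzed. The interior discretization in~\eqref{eq:scheme} is not monotone: it is the \emph{filtered} scheme~\eqref{eq:filtered}, a blend of the monotone wide-stencil \MA discretization with a non-monotone accurate scheme. So the step where you ``cite monotonicity of Oberman-type schemes and verify it for \eqref{eq:scheme}'' would fail, and the classical Barles--Souganidis theorem~\cite{BSnum} does not apply as stated. The missing ingredient is the extension to nearly monotone (filtered) schemes, \autoref{thm:schemesConverge} from~\cite{ObermanFroeseFiltered}: the non-monotone part is a perturbation whose size vanishes with the discretization parameters, and that theorem also delivers existence and stability of the discrete solutions when the perturbation is continuous. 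In the paper's proof this replaces your proposed Perron-type existence argument and barrier-based $L^\infty$/gradient bounds; what remains to check is only (i) consistency and near-monotonicity of the interior \MA discretization (imported from earlier work), and (ii) consistency and genuine monotonicity of the boundary discretization, the latter following by construction from the obliqueness property (Lemma~\ref{lem:directions}), which lets the upwind differences use only points inside $\OS$. Your plan does correctly flag the boundary treatment as delicate, but it does not identify obliqueness as the mechanism that makes the boundary scheme monotone.

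On the comparison principle, you are right that it is the crux of the phrase ``the unique convex viscosity solution,'' but the route you suggest (adapting Delano\"e--Urbas) concerns classical solvability of the second boundary value problem, not viscosity comparison. The paper instead imports uniqueness for these oblique, Hamilton--Jacobi-type boundary conditions from~\cite{barles1999nonlinear}, and what must be verified is Barles' structural hypotheses: the obliqueness condition $\Hd(p+\lambda n_x)-\Hd(p)\ge C\lambda$ on $\partial\OS$ (which holds with $C=1$ because of the representation~\eqref{HJoblique} as a supremum of affine functions increasing in the $n_x$ direction) together with a mild growth condition, which follows since $\Hd$ is $1$-Lipschitz in $p$. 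Finally, a small bookkeeping point: $d\theta$ is the directional resolution of the wide-stencil \MA discretization in the interior, while $\da$ discretizes the set of admissible normals $n$ in the boundary Hamiltonian; consistency of the boundary scheme as $\da\to0$ uses continuity of the Legendre--Fenchel transform $\Hd^*$, not a discretization of the convexity constraint.
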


\begin{remark}
{This} theorem does not give a rate of convergence, which is typical of this kind of convergence result, since viscosity solutions can be singular.  However, the filtered scheme which we use is formally second order accurate, so, in the case of smooth solutions, we expect second order accuracy.
\end{remark}

\subsection{Weak solutions of the OT problem}

Any convergent approach {to} the solution of the OT problem in the continuous setting must use an appropriate notion of solution. 
The optimal transportation problem with quadratic cost has a solution under quite general conditions on the measures to be transported. 
It frequently leads to singular solutions of~(\ref{MA},~\ref{BV2},~\ref{convex}), so some notion of weak solutions is needed in the general setting. 
 It is natural to ask under what conditions the system (\ref{MA},~\ref{BV2},~\ref{convex}) is well-posed.

For the convenience of the reader, we briefly discuss regularity and weak solutions, for the purpose of using these notions of solutions for building approximations.  This is a substantial topic, which we can not cover here, we refer to~\cite[Chapter 4]{Villani} and the book~\cite{GutierrezMABook}.

 The existence of classical solutions to~(\ref{MA},~\ref{BV2},~\ref{convex}) requires a smooth, convex target, and smooth strictly positive densities.  The regularity theory for classical solutions can be found in~\cite{MR1136351,MR1454261} and  \cite{MR1426885}. 

Aleksandrov solutions~\cite{GutierrezMABook} are defined in terms of the Hessian measure of the potential function.  This allows for the target measure to be singular with respect to Lebesgue measure.  A typical example is the potential function of a cone $u(x) = \norm{x}$, which is the potential for the mapping between a Dirac measure and a (scaled) characteristic function of the unit ball.  

Pogorelov solutions~\cite{MR1423367}  use specific  target density in the form of a sum of  weighted Dirac measures.  The convex potential function is constructed by lifting affine functions with gradients prescribed by the support of the Dirac measures in the image.  Adjusting functions changes the measure of the support of the affine function so that it matches the target value. 

Convex duality via the Legendre transform is the theoretical idea which links two notions of weak solution provided by Aleksandrov solutions and Pogorelov solutions.  The basic building blocks are polyhedral functions, and their duals under the Legendre transform.    

Brenier's solutions~\cite{MR1100809} are the most general discussed herein.  They allow for the example of tearing a disc, see \autoref{sec:exSplit}.

Viscosity solutions~\cite[page 130]{Villani}, which require continuous density functions, are defined using the comparison principle.  In the case where both measures are absolutely continuous, viscosity solutions coincide with Alexandrov solutions.

For  data in the form of Dirac masses, the constructive  method of \emph{Pogorelov solutions} is natural for the OT problem.  In this setting,~\eqref{BV2} is naturally satisfied, and the gradient mapping is approximated instead of the potential, which reduced the discretization error of the mapping. 
This approach formed the  basis of  several  early numerical methods:~\cite{olikerprussner88, MR1089128,McCannGangbo} and more recently  \cite{bosc, merigot}.   Applying this approach  to more  general 
density functions requires quantization which introduces errors which are difficult to estimate.  Also
the best algorithm (assuming the initial density is also in the form of a sum of Dirac masses), 
the \emph{auction} algorithm \cite{MR1195629},  scales as $\bO(N^2logN)$ in the number of Dirac masses.

Viscosity solutions, though less general than Aleksandrov solutions,  are well-suited to finite difference methods. They allow for optimal  maps with  
discontinuous gradients to be computed. Densities are easily discretized on grids.  Also, there is a robust convergence theory~\cite{BSnum} which after 
the reformulation   of ~\PDE into~\PDEL  can be applied. Finally, in previous work we have developed fast and robust  numerical methods for computing viscosity solutions of the  \MA equation with standard boundary conditions (see \autoref{sec:prevwork}).   Moreover, the numerical results in 
\cite{SBVP_Num}  indicate that this approach reduces to the computational cost of a linear elliptic solver which is 
 log-linear with respect to  the number of grid points. 
 

\subsection{Relation to our previous work}\label{sec:prevwork}

A lot of attention has been devoted to the numerical solution of the {Monge-Amp\`ere}
equation since the pioneering work of \cite{DGnum2006}.  

This article builds on a series of papers which have developed convergent and robust solution methods for the {Monge-Amp\`ere} equation.  The definition of elliptic difference schemes was presented in~\cite{ObermanDiffSchemes}, which laid the foundation for the schemes that followed.  See the references therein for a comprehensive reference list on numerical methods 
for Monge-Amp\`ere equation.

A first convergent scheme for the \MA equation was built in~\cite{ObermanEigenvalues}; it was restricted to two dimensions and to a slow iterative solver.   Implicit solution methods were first developed in~\cite{BenamouFroeseObermanMA}, where it was demonstrated that the use of non-convergent schemes led to slow solvers for singular solutions.   
In~\cite{ObermanFroeseMATheory} a new discretization was presented, which generalized to three and higher dimensions;  this {also} led to a fast Newton solver.  

The wide stencil schemes used in the convergent discretizations introduced a new parameter, the directional resolution, which led to decreased accuracy.  In an attempt to improve accuracy on less singular solutions, a hybrid discretization was built in~\cite{ObermanFroeseFast}. This discretization combined the advantages of accuracy in smooth regions, and robustness (convergence and stability) near singularities.   However, this was accomplished at the expense of a convergence proof.   In addition, it required \emph{a priori} knowledge of singularities, which is not available in the OT setting.    
  
In~\cite{ObermanFroeseFiltered}{,} the convergence theory of Barles and Souganidis was extended to  filtered (nearly monotone) schemes.  The filtered schemes replaced the hybrid schemes as a method to obtain more accuracy, without losing the convergence proof. On less singular solutions,  the filtered schemes can be used with a compact stencil eliminated the complication and slightly increased cost of the wide stencil.

The  numerical resolution of  ~\PDE was first  addressed in~\cite{FroeseTransport}. 
The method consisted of iteratively solving~\eqref{MA} with Neumann boundary conditions, and projecting the resulting set onto the target set $\OT$.  
The new projection is then used to derive new Neumann boundary conditions.  
This method required several iterations to reach a solution satisfying the state constraint (\ref{BV2}). No convergence proof was available. 
We show in \cite{SBVP_Num}  that this method can be interpreted as a particular solution method for~(\ref{MA},~\ref{BV2},~\ref{convex}).

\section{Representation and approximation of the boundary conditions}


 In this section we describe our approach to approximating the boundary condition ~\eqref{OT1}  based on the signed distance function to the boundary of the target set~$\OT$.


\subsection{Representation and properties of the distance function}\label{sec:disfun}

%
For a bounded, convex set $\OT$, the signed distance function, 
\bq\label{signeddist}
d(y, \OT) =
\begin{cases}
+ \dist(y, \partial \OT), \quad \text{ $y$ outside of $\OT$,}
 \\
- \dist(y, \partial \OT), \quad \text{ $y$ inside $\OT$.}
 \end{cases}
\eq
 is convex.
 
The function 
\[
\Hd(y) = d(y,\OT),
\]
 which is now interpreted as a convex Hamiltonian, can be written in terms of the supporting hyperplanes to the convex set,
\bq\label{HPlanes}
\Hd(y) = \sup_{y_0 \in \partial \OT}  \left \{n(y_0) \cdot ( y - y_0) \right \}, 
\eq
where $n(y_0)$ is the outward normal to $\partial \OT$ at $y_0$; see~\autoref{fig:shape}. 
 Equivalently, since the image of the normals to $\partial \OT$ is the unit sphere, we can write
\bq\label{Hnu}
\Hd(y) = \sup_{\norm{n} = 1}  \{n \cdot (y - y(n) )  \}, 
\eq
where $y(n)$ is the point in $\partial \OT$ with normal $n$. 
The representations~\eqref{HPlanes} and~\eqref{Hnu} are equivalent via duality: in the first case the convex set is represented by normals, in the second by points.
We found the second representation to be useful, since sets are naturally approximated by a characteristic functions: this was  the representation used in~\cite{SBVP_Num}.

The statements above follow from the \emph{Supporting Hyperplane Theorem}~\cite[Section 2.5]{BoydBook}, which says that if $y_0 \in \partial \OT$, for a convex set $\OT$, 
then $y_0$ has a (possibly non-unique) supporting hyperplane, 
\[
P  = \{ A(y) = 0  \mid  A(y) \equiv n\cdot (y -y_0) \},
\]
 where $A(y) \le 0$, for $y\in \OT$.  Without loss of generality, $\norm{n}=1$, and we can define $n$ to be (an) outward normal to $\OT$ at $y_0$.  Then, we can define 
\[
H^*(n) = n\cdot y(n) = \sup_{y\in \partial \OT} n\cdot y,
\]
where the equality follows from the supporting hyperplane result.  We have proven the following lemma.

\begin{figure}
\begin{center}
\includegraphics[width=0.5\textwidth]{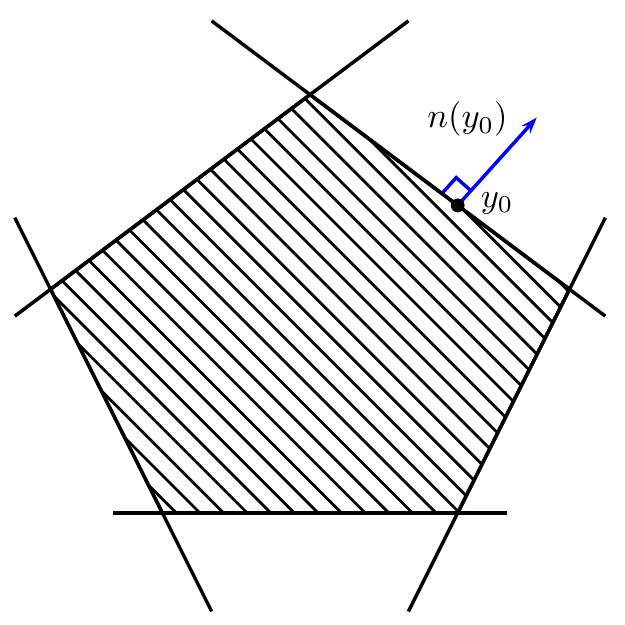}
\end{center}
\caption{Polyhedral target set. }
\label{fig:shape}
\end{figure}

\begin{lemma}\label{lem:distrep}
Let $\Hd(y) = d(y,\OT)$ be the signed distance function to the smooth, convex, bounded set, $\OT$.  Then 
\[
H(y) = \sup\limits_{\norm{n} = 1}\{y\cdot n - H^*(n)\}
\]
where 
\bq\label{LF3} 
H^*(n) = \sup\limits_{y_0\in\partial\OT}\{y_0\cdot n\}. 
\eq
\end{lemma}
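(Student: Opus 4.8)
The plan is to derive the claimed formula directly from the two representations of $\Hd$ already displayed in the text, namely the supporting-hyperplane form~\eqref{HPlanes} and the normalized form~\eqref{Hnu}, which together with the Supporting Hyperplane Theorem give essentially all the ingredients. Concretely, I would start from~\eqref{Hnu},
\[
\Hd(y) = \sup_{\norm{n}=1} \{ n\cdot(y - y(n)) \} = \sup_{\norm{n}=1} \{ y\cdot n - n\cdot y(n) \},
\]
so the only thing left is to identify $n\cdot y(n)$ with $H^*(n) = \sup_{y_0\in\partial\OT}\{y_0\cdot n\}$. This is exactly the content of the displayed identity $H^*(n) = n\cdot y(n) = \sup_{y\in\partial\OT} n\cdot y$ that precedes the lemma, and it is a direct consequence of the Supporting Hyperplane Theorem: the hyperplane through $y(n)$ with normal $n$ is supporting, so $n\cdot y \le n\cdot y(n)$ for all $y\in\OT$, with equality at $y_0 = y(n)$; hence the supremum of $y_0\cdot n$ over $\partial\OT$ (equivalently over the closure of $\OT$, since a linear functional on a bounded set attains its max on the boundary) equals $n\cdot y(n)$.

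In more detail, the key steps in order are: (1) recall from~\eqref{signeddist} that $d(\cdot,\OT)$ is convex and write $\Hd(y)=d(y,\OT)$; (2) for $y$ outside $\OT$, the nearest point $\pi(y)\in\partial\OT$ has outward unit normal $n = (y-\pi(y))/\norm{y-\pi(y)}$, and $d(y,\OT) = n\cdot(y-\pi(y))$, which gives $d(y,\OT) \le \sup_{\norm n=1}\{y\cdot n - H^*(n)\}$ since the supporting hyperplane at $\pi(y)$ shows $H^*(n) = n\cdot\pi(y)$; for $y$ inside or on $\OT$ one checks the right-hand side is $\le 0$ using that every supporting hyperplane satisfies $n\cdot y \le n\cdot y_0 = H^*(n)$; (3) conversely, for any unit $n$ the function $y\mapsto n\cdot y - H^*(n)$ is $1$-Lipschitz, vanishes on the supporting hyperplane $\{n\cdot y = H^*(n)\}$, and is $\le 0$ on $\OT$, so it is a lower bound for the signed distance function $d(\cdot,\OT)$ (the signed distance is the pointwise supremum of all such affine $1$-Lipschitz functions that are nonpositive on $\OT$, which is precisely~\eqref{HPlanes}); taking the sup over $n$ gives the reverse inequality. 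Combining (2) and (3) yields $\Hd(y) = \sup_{\norm n=1}\{y\cdot n - H^*(n)\}$, and~\eqref{LF3} is just the definition of $H^*$ restated.

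The main (and really only) obstacle is the bookkeeping around the \emph{signed} nature of the distance function: one must handle $y\notin\OT$, $y\in\partial\OT$, and $y\in\operatorname{int}\OT$ and verify that the single formula $\sup_{\norm n=1}\{y\cdot n - H^*(n)\}$ reproduces all three cases with the correct sign. The cleanest way around this is to invoke the identity $d(y,\OT) = \sup\{\ell(y) : \ell \text{ affine}, \norm{\grad\ell}\le 1, \ell\le 0 \text{ on }\OT\}$, i.e.\ the representation~\eqref{HPlanes}, so that the per-case sign analysis is subsumed into a single variational characterization; the Supporting Hyperplane Theorem then guarantees that restricting to affine functions of the form $\ell(y) = n\cdot y - H^*(n)$ with $\norm n = 1$ does not shrink the supremum. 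Smoothness of $\OT$ is used only to ensure the outward normal $n(y_0)$ is well defined and that the map $n\mapsto y(n)$ in~\eqref{Hnu} makes sense; it is not essential to the formula, which is why the general polyhedral picture in~\autoref{fig:shape} still applies via duality.
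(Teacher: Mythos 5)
Your core argument is the same as the paper's: the paper proves the lemma simply by invoking the Supporting Hyperplane Theorem to identify $n\cdot y(n)=\sup_{y_0\in\partial\OT}n\cdot y_0=\Hd^*(n)$ and then reading the result off the representations~\eqref{HPlanes}--\eqref{Hnu}, which is exactly your first paragraph. Where you go beyond the paper --- attempting to justify the signed-distance representation~\eqref{HPlanes} itself --- there is a genuine flaw. The characterization you lean on, $d(y,\OT)=\sup\{\ell(y):\ell \text{ affine},\ \norm{\grad\ell}\le 1,\ \ell\le 0 \text{ on }\OT\}$, is false as stated: $\ell\equiv 0$ is admissible, so that supremum is $\ge 0$ everywhere and cannot reproduce the negative values of the signed distance inside $\OT$. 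The correct statement requires $\norm{\grad\ell}=1$ exactly, but then it is precisely the formula being proved, so invoking it to ``subsume the sign analysis'' is circular. Concretely, your explicit steps (2)--(3) establish $\sup_{\norm{n}=1}\{y\cdot n-\Hd^*(n)\}\le d(y,\OT)$ everywhere and $d(y,\OT)\le\sup$ for $y$ outside $\OT$, but for interior $y$ you only check that the right-hand side is $\le 0$; the inequality $d(y,\OT)\le\sup$ (i.e.\ $\sup \ge -\dist(y,\partial\OT)$) is never shown.

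The repair is short: for $y$ in the interior of $\OT$, let $\pi(y)\in\partial\OT$ be a nearest boundary point and $r=\norm{y-\pi(y)}=\dist(y,\partial\OT)$. The open ball $B(y,r)\subset\OT$ touches $\partial\OT$ at $\pi(y)$, so any supporting hyperplane of $\OT$ at $\pi(y)$ also supports the ball there and hence has outward normal $n=(\pi(y)-y)/r$; since it supports $\OT$ at $\pi(y)$, $\Hd^*(n)=n\cdot\pi(y)$, and therefore $y\cdot n-\Hd^*(n)=n\cdot(y-\pi(y))=-r=d(y,\OT)$, giving the missing lower bound on the supremum. With that case added (and the analogous computation you already gave for exterior $y$, plus $\le$ from your step (3)), your argument is complete and is, in substance, a fleshed-out version of the paper's terse proof. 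One further small caveat: smoothness of $\partial\OT$ is not needed for the lemma (only convexity and boundedness enter), but if you do use the map $n\mapsto y(n)$ from~\eqref{Hnu} you should note it may be multivalued for non-strictly-convex targets, which is harmless here since only the value $n\cdot y(n)=\Hd^*(n)$ is used.
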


\begin{remark}[Approximation of $\Hd^*$]
Later we will make an approximation by taking the supremum over 
a finite subset of the admissible directions.  These direction vectors are typically given by a uniform discretization of $[0,2\pi]$, with angle discretization parameter $d\alpha$.  We require only that $d\alpha\to0$ for convergence.
\end{remark}

\subsection{Obliqueness}

We recall here a fundamental property of maps characterized as the gradient of a convex potential.
In \cite{MR1136351,MR1454261}, this {\em obliqueness} result is 
used to prove existence of classical solutions to  (\ref{MA}-\ref{BV2}). 

This condition, which leads to Lemma~\ref{lem:directions}, will allow us to build an explicit monotone upwind  discretization of~\eqref{HPlanes}, using points inside the domain.

\begin{figure}[hbt]
\begin{center}
\centering
\includegraphics[width=\textwidth]{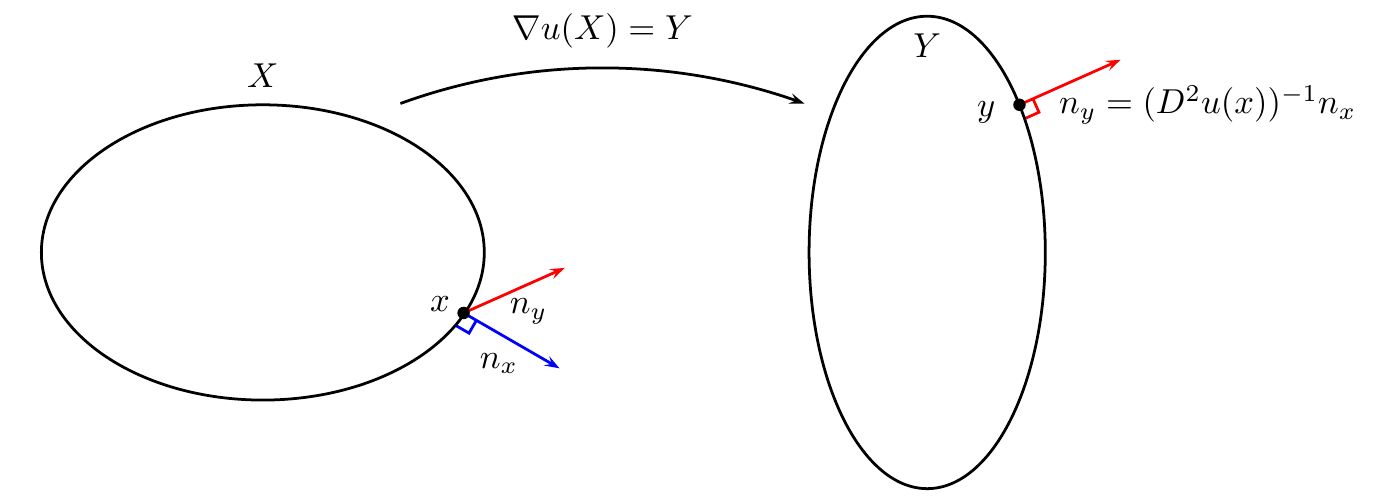}
\end{center}
\caption{Illustration of the mapping
$y = \grad u(x) $ and the  normal vectors. 
} 
\label{fig:boundarymapping}
\end{figure}

\begin{lemma}\label{lem:oblique}
Suppose $\OS$ is  a convex domain, and $\OT = \grad u (\OS)$ is the image of {$\OS$} under the mapping $\grad u$, where $u$ is a convex twice continuously differentiable function.  Then the normal vectors 
make an acute angle,
\bq\label{oblique}
n_x \cdot n_y \ge 0.
\eq
See \autoref{fig:boundarymapping}. 
\end{lemma}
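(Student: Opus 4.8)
The plan is to relate the two normal vectors $n_x$ and $n_y$ through the gradient map $y = \grad u(x)$, and exploit convexity of both $\OS$ and $u$. Fix a boundary point $x_0 \in \partial \OS$ with outward normal $n_x$, and let $y_0 = \grad u(x_0) \in \partial \OT$ with outward normal $n_y$. First I would use convexity of $\OS$: the supporting hyperplane at $x_0$ gives $n_x \cdot (x - x_0) \le 0$ for all $x \in \OS$. Similarly, convexity of $\OT$ gives the supporting hyperplane at $y_0$: $n_y \cdot (y - y_0) \le 0$ for all $y \in \OT = \grad u(\OS)$, i.e.\ $n_y \cdot (\grad u(x) - \grad u(x_0)) \le 0$ for all $x \in \OS$.

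The key step is to connect these two inequalities using the monotonicity of the gradient of a convex function: for any $x \in \OS$,
\[
(\grad u(x) - \grad u(x_0)) \cdot (x - x_0) \ge 0.
\]
Now the idea is to test with points $x$ near $x_0$ along directions that are "inward" with respect to $\OS$. Take $x = x_0 + t v$ for small $t > 0$ and a direction $v$ with $n_x \cdot v < 0$ (so $x \in \OS$ for small $t$, at least when $\partial\OS$ is smooth). Using the $C^2$ regularity of $u$, expand $\grad u(x_0 + tv) - \grad u(x_0) = t\, D^2 u(x_0) v + o(t)$. The convexity-of-$\OT$ inequality then reads $n_y \cdot D^2 u(x_0) v \le 0$ for every such $v$, and by continuity (letting $n_x \cdot v \to 0^-$) also for $v = $ any tangent direction to $\partial \OS$ at $x_0$, hence $n_y \cdot D^2 u(x_0) v \le 0$ for all $v$ in the closed half-space $\{v : n_x \cdot v \le 0\}$. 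A linear functional that is $\le 0$ on a closed half-space must be a nonpositive multiple of the functional defining that half-space: $D^2 u(x_0)^T n_y = \lambda n_x$ for some $\lambda \le 0$. Wait — $D^2 u$ is symmetric, so this says $D^2 u(x_0) n_y = \lambda n_x$. Pairing with $n_y$ gives $\lambda (n_x \cdot n_y) = n_y \cdot D^2 u(x_0) n_y \ge 0$ by convexity of $u$ (positive semidefiniteness of $D^2 u$). Since $\lambda \le 0$, this forces $n_x \cdot n_y \le 0$ unless $\lambda = 0$; so I have the sign backwards somewhere and should instead track orientations carefully — the correct bookkeeping (choosing $n_y$ as the outward normal consistent with $y_0$ being on the boundary of the \emph{image}, and using that $\grad u$ maps the outward normal direction of $\OS$ to the outward direction of $\OT$) yields $\lambda \ge 0$ and hence $n_x \cdot n_y \ge 0$.

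The main obstacle is exactly this orientation/sign bookkeeping: making rigorous the claim that $D^2 u(x_0)$ carries the outward normal cone of $\OS$ at $x_0$ to the outward normal cone of $\OT$ at $y_0$ with the correct (nonnegative) proportionality, and handling the case where $D^2 u(x_0)$ is singular (so $\grad u$ is not locally a diffeomorphism). To deal with degeneracy cleanly I would avoid inverting $D^2 u$ and instead argue directly from the two supporting-hyperplane inequalities plus gradient monotonicity: for $x = x_0 + tv$ with $t \downarrow 0$, divide $n_y \cdot (\grad u(x_0 + tv) - \grad u(x_0)) \le 0$ by $t$ and pass to the limit to get $n_y \cdot D^2 u(x_0) v \le 0$ whenever $n_x \cdot v \le 0$; then a short convex-geometry lemma (a linear functional nonpositive on a half-space is a nonpositive scalar times the defining covector, and the scalar's sign is pinned down by pairing against $n_y$ and using $D^2 u \ge 0$) finishes the argument. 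Getting the sign of that scalar right, via a careful choice of which normal is "outward," is the crux; everything else is routine Taylor expansion and convexity.
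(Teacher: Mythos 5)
Your overall strategy (supporting hyperplanes for $\OS$ and $\OT$, Taylor expansion of $\grad u$ at the boundary point, and a half-space argument to identify $D^2u(x_0)n_y$ with a multiple of $n_x$) is a viable route, and it differs from the paper's proof, which instead represents $\partial\OS$ as the zero level set of $x\mapsto \Hd(\grad u(x))$ with $\Hd$ the signed distance to $\OT$ and gets $n_x = c\,D^2u(x)\,n_y$ in one line by the chain rule. But as written your crux step is wrong, and you do not actually repair it. The convex-geometry lemma you invoke has the wrong sign: a linear functional $\ell(v)=w\cdot v$ that is nonpositive on the half-space $\{v: n_x\cdot v\le 0\}$ must vanish on the hyperplane $\{n_x\cdot v=0\}$ (test $v$ and $-v$), hence $w=\mu n_x$, and then testing a strictly inward direction $v$ with $n_x\cdot v<0$ forces $\mu\ge 0$, i.e.\ $w$ is a \emph{nonnegative} multiple of $n_x$. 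You state ``nonpositive multiple'' twice --- once in the main argument (where you notice it yields $n_x\cdot n_y\le 0$ and flag that something is off) and again in your proposed fix. Moreover, the fix you gesture at, ``the scalar's sign is pinned down by pairing against $n_y$ and using $D^2u\ge 0$,'' is circular: pairing gives $\mu\,(n_x\cdot n_y)=n_y^TD^2u(x_0)n_y\ge 0$, which constrains the product $\mu\,(n_x\cdot n_y)$ and cannot determine the sign of $\mu$ unless you already know the sign of $n_x\cdot n_y$, which is the conclusion you are trying to prove. The sign of $\mu$ must come from evaluating the functional at inward directions, not from pairing with $n_y$.

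With the sign corrected the argument closes immediately and in fact recovers exactly the paper's key identity: $D^2u(x_0)n_y=\mu n_x$ with $\mu\ge 0$, so $0\le n_y^TD^2u(x_0)n_y=\mu\,(n_x\cdot n_y)$, giving $n_x\cdot n_y\ge 0$ whenever $\mu>0$. Note that when $\mu=0$ (i.e.\ $D^2u(x_0)n_y=0$) your argument, like the paper's, gives no information; the paper sidesteps this by asserting $D^2u$ is positive definite (so that the normalization constant $c$ is positive and well defined), so this degeneracy is a shared caveat rather than a defect specific to your approach. The genuine gap is the unresolved sign bookkeeping at the step you yourself identify as the crux.
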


\begin{proof}
Let $\grad u(x) = y\in \partial \OT$ and let $\Hd(y) = \dist(y,\partial Y)$ be the signed distance function to $Y$, so that 
\[
\OT = \{ y \mid \Hd(y) = 0 \},
\]
and 
\[
\OS = \{ x \mid \Hd(\grad u (x)) = 0 \}.
\]
Then $n_y = \grad H (y)$
and, by the chain rule for differentiation,
\[
n_x = c \grad D^2 u(x) \grad \Hd(\grad u(x)) = c D^2u(x) \grad \Hd(y)
\]
for a normalization constant  $c$. 
Thus
\[
n_x \cdot n_y = c (\grad \Hd(y))^T  D^2u(x) \grad \Hd(y) \ge 0,
\]
since convexity of $u$ means $D^2u$ is positive definite.
\end{proof}

We are now able to give the characterization of (\ref{OT1}) which will be used for the monotone discretization.

Note, it is enough to check this condition on $C^2$ functions, because in the definition of viscosity solutions, the boundary conditions are tested in the weak sense, which means checking the appropriate inequality when touching from above or below by a $C^2$ function.
In particular, we wil use oblique boundary conditions and check equations (4) and (5) in \cite{barles1999nonlinear}. 
\begin{lemma}\label{lem:directions}
Let $u\in C^2(\OS)$ be any convex function that satisfies~\eqref{BV2}.  For any $x\in\partial\OS$ with unit outward normal $n_x$, the supremum in~\eqref{OT1} can be restricted to vectors making an acute angle with $n_x$:
\bq\label{HJoblique}
\Hd(\grad u(x)) = \sup\limits_{\norm{n}=1}\{ \grad u(x) \cdot n - \Hd^*(n) \mid n\cdot n_x > 0\} = 0.
\eq
\end{lemma}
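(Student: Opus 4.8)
\emph{Proof proposal.} The idea is that this statement is essentially the combination of the representation of $\Hd$ in Lemma~\ref{lem:distrep} with the obliqueness estimate of Lemma~\ref{lem:oblique}. Fix $x\in\partial\OS$ and put $y=\grad u(x)$. Since $u$ is convex and satisfies~\eqref{BV2}, the gradient map sends $\partial\OS$ into $\partial\OT$, so $y\in\partial\OT$ and hence $\Hd(y)=d(y,\OT)=0$; this is the right-hand equality of~\eqref{HJoblique}, and by Lemma~\ref{lem:distrep} it also says that the unrestricted supremum $\sup_{\norm n=1}\{y\cdot n-\Hd^*(n)\}$ equals $0$. As the restricted supremum in~\eqref{HJoblique} is taken over a subset of the unit sphere, it is automatically $\le0$, so the statement reduces to producing an admissible direction (one with $n\cdot n_x>0$) on which the value $0$ is attained or approached.

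For this I would use the Supporting Hyperplane Theorem exactly as in Lemma~\ref{lem:distrep}: if $n_y$ denotes the outward unit normal to $\partial\OT$ at $y$, then $y\cdot n_y=\Hd^*(n_y)=\sup_{y_0\in\partial\OT}y_0\cdot n_y$, so $y\cdot n_y-\Hd^*(n_y)=0=\Hd(y)$ and $n_y$ realizes the unrestricted supremum. It then remains to check that $n_y$ makes a nonnegative angle with the outward normal $n_x$ to $\partial\OS$ at $x$ --- but that is precisely the obliqueness inequality $n_x\cdot n_y\ge0$ of Lemma~\ref{lem:oblique}, applied to the convex $C^2$ function $u$. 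If $n_x\cdot n_y>0$ then $n_y$ is itself an admissible direction and the restricted supremum is $\ge y\cdot n_y-\Hd^*(n_y)=0$, which together with the upper bound gives equality. If $n_x\cdot n_y=0$ --- which may happen here, since $D^2u$ is only assumed positive semidefinite --- then $n_y$ lies in the closure of the open hemisphere $\{\norm n=1:\ n\cdot n_x>0\}$; since $n\mapsto y\cdot n-\Hd^*(n)$ is Lipschitz continuous (because $\Hd^*$ is a supremum of linear functions over the bounded set $\OT$), choosing unit vectors $n_k\to n_y$ with $n_k\cdot n_x>0$ gives $y\cdot n_k-\Hd^*(n_k)\to0$, so again the restricted supremum is $\ge0$. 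Combining with the bound $\le0$ yields the claimed identity.

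The analytic ingredients --- Lipschitz continuity of $\Hd^*$ and the supporting-hyperplane identity --- are routine. I expect the only genuinely delicate point to be the assertion $\grad u(\partial\OS)\subseteq\partial\OT$: it uses that $\grad u$ takes interior points of $\OS$ to interior points of $\OT$, which is transparent when $u$ is strictly convex (so that $\grad u$ is a homeomorphism onto $\OT$) and, in the merely convex case, follows from monotonicity of the subdifferential together with the openness of $\OT$ built into~\eqref{BV2}. A minor secondary subtlety is the borderline case $n_x\cdot n_y=0$, which forces the continuity/approximation argument above instead of a direct verification that $n_y$ lies in the set of admissible directions.
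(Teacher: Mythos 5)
Your proof takes essentially the same route as the paper: identify the maximizer of the unrestricted supremum as the outward unit normal $n_y$ to $\partial\OT$ at $\nabla u(x)$ (via the supporting hyperplane identity $y\cdot n_y=\Hd^*(n_y)$) and invoke the obliqueness Lemma~\ref{lem:oblique} to conclude $n_y\cdot n_x\ge 0$, so restricting to directions making an acute angle with $n_x$ does not lower the supremum. Your write-up is in fact somewhat more careful than the paper's brief argument, since you also handle the borderline case $n_y\cdot n_x=0$ (where the maximizer violates the strict constraint $n\cdot n_x>0$) by an approximation argument and you justify $\nabla u(\partial\OS)\subseteq\partial\OT$, points the paper leaves implicit.
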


\begin{proof}
The supremum above will be attained for a value of $n = n^*$, which will be identical to the unit outward normal to the target at the point $\nabla u(x)$.

From Lemma~\ref{lem:oblique}, we know that $n^*\cdot n_x = n_y\cdot n_x \geq 0$.  Consequently, it is only necessary to check values of $n$ that make an acute angle with the boundary of the domain.
\end{proof}

\subsection{Monotone discretization of $\Hd$}\label{sec:discH}
In this section we explain how to build a monotone discretization of $\Hd$ using points at the boundary and on the inside of the domain $\OS$.

The expression~\eqref{HJoblique}, which comes from writing the convex set $\OT$ in terms of its tangent hyperplanes, leads to a natural convergent finite difference discretization.  This expression can be interpreted as a Hamilton-Jacobi-Bellman equation arising from an optimal control problem.   

We recall (see Definition~\ref{def:Ell}) that an elliptic (monotone) discretization of $\Hd(\nabla u)$ takes a particular form.  In the case at hand, the discretization is given in the following form.  At the point $x_i$ the discretization is  a nondecreasing function of the differences between $u(x_i)$ and $u(x_j)$  where $x_{j}$ are the neighbours of $x_i$.

If we let $\Gamma_{x_i} = \{n \mid n\cdot n_{x_i}>0, \norm{n}=1\}$, 
a simple way of writing an upwind discretization is to approximate the signed distance function by
\[ 
\Hd(\nabla u(x_i)) = \sup\limits_{n\in\Gamma_{x_i}}\{\nabla u(x_i)\cdot n - \Hd^*(n)\}
\approx \sup\limits_{n\in\Gamma_i}\left\{\frac{u(x_i) - u(x_i - n h)}{h} - \Hd^*(n)\right\} 
\]
where $h$ is a small discretization parameter.  The discretization of the linear advection above can be directly implemented in the 
wide stencil framework as in~\cite{ObermanEigenvalues} (see also section 3.3 below).
As long as the domain is uniformly convex and $h$ is sufficiently small, obliqueness ensures that the point $x_i - n h$ is inside the domain.  This guarantees that the monotonicity for all $n \in\Gamma_i$.  Taking  the supremum of monotone terms results in a monotone expression. 

Alternately, we can use standard compact upwind finite differences on a grid  for the linear advection equation.
Assume for simplicity that the boundary is linear and  the local coordinate system such 
 that at the boundary point $x_{i,j}$, the normal is $n_{x_{i,j}}= (-1,0)$.  
Along this side, the set of admissible directions $\Gamma_{x_{i,j}}$ will be given by
\[ \{n = (n_1,n_2) \mid n_1 < 0, \norm{n} = 1\}. \]
Then, letting $dx$ denote the spatial resolution of the grid, we can approximate the advection terms in (\ref{HJoblique}) by
\begin{multline*} \nabla u(x_{i,j})\cdot n \approx n_1 \frac{u(x_{i+1,j})-u(x_{i,j})}{dx} \\
+ \max\{n_2,0\}\frac{u(x_{i,j}) - u(x_{i,j-1} )}{dx} + \min\{n_2,0\}\frac{u(x_{i,j+1})-u(x_{i,j})}{dx}. 
\end{multline*}
Due to the obliqueness property again, upwinding leads to a scheme that 
 relies on values inside the square and, because $n_1 < 0$, it is monotone.
Taking the supremum of these monotone schemes over all admissible directions, we again 
preserve monotonicity of the scheme. 

We follow this last approach in \cite{SBVP_Num} where we always use an initial density,  $\rho_X$,  
whose support is embedded in a rectangular domain and padded by zeros. The robustness of our 
\MA solver to degenerate solution allows for this simplification.

\section{Convergence} 
We begin with a review of background material that will be needed to construct and prove the convergence of our scheme for solving the second boundary value problem for the \MA equation.

The viscosity approximation theory developed by Barles and Souganidis~\cite{BSnum}  provides criteria for the convergence of approximation schemes:  schemes that are consistent, monotone, and stable converge to the unique viscosity solution of a degenerate elliptic equation.    This framework was extended in~\cite{ObermanFroeseFiltered} to nearly monotone schemes, which are more accurate.

In order to prove convergence, we need to know that there exist unique solutions of the system \PDE.  The boundary condtions are nonstandard, so the uniqueness of the system is not coverged by the standard literature.  However, uniqueness for these types of boundary conditions is proved in \cite{barles1999nonlinear}.

\subsection{Testing the Barles conditions for obliqueness}

\blue{Need to explain how (H1) on page 3 is satisfied by our operator} 

The obliqueness condition (H1) in Barles, written in our notation is 
\[
H(x,p + \lambda n_x) - H(x,p) \ge C \lambda, \quad \text{ for } x \in \partial \OS
\]
This follows from \eqref{HJoblique} in fact, the constant is $C=1$.  (details supremum of affine functions increasing in $n_x$ with slope 1. \dots ) 

The second condition (H2) is a standard growth condition on $H$ which certainly follows from the fact that $H$ is a 1-Lipschitz function in $p$ and continuous in $x$.

This allows us to conlclude comparison for viscosity solutions. 

\subsection{Writing the combined problem as a single operator}

It is convenient to write the combined PDE and boundary conditions \PDE as a single (discontinuous) operator on $\bar{\OS}$
\bq
\label{FOP} 
F(x,u(x) , \nabla u(x), \nabla^2 u(x)) =  0 , \,\, x \in \bar{\OS}  
\eq
where $F$ depends on $\rho_{\OS},\rho_{\OT}$ and $\Hd$: 
 \bq
\label{FOPdef} 
F(x,u,p, M)  = 
\begin{cases}
 \det (M) - \rho_{\OS}(x)/\rho_\OT(p),  &  \,  x  \in \OS  
\\ 
 \Hd(p), & \, x \in \partial \OS,
\end{cases} 
\eq
along with the convexity condition~\eqref{convex}.

\begin{example}
The Dirichlet problem is simply given by replacing $\Hd(p)$ with $u-g$ for $x \in \partial \OS$.
\end{example}

\subsection{Convergence theory}
In this subsection we review the definitions and the general theory for convergence schemes, it is applied to the equation~\eqref{FOPdef} in a subsequent section.

\begin{definition}[Consistent]\label{def:consistent}
The scheme~$\Fe$ is \emph{consistent} with the equation~$F = 0$ if for any smooth function $\phi$ and $x\in\bar{\Omega}$,
\[ \limsup_{\e\to0,y\to x,\xi\to0} \Fe(y,\phi(y)+\xi,\phi(\cdot)+\xi) \leq F^*(x,\phi(x),\nabla\phi(x),D^2\phi(x)), 
\]
\[ \liminf_{\e\to0,y\to x,\xi\to0} \Fe(y,\phi(y)+\xi,\phi(\cdot)+\xi) \geq F_*(x,\phi(x),\nabla\phi(x),D^2\phi(x)). \]
\end{definition}

\begin{definition}[Elliptic]\label{def:Ell}
The scheme~$\Fe$ is \emph{elliptic} if it can be written
\[
\Fe[v] = \Fe(x, v(x), v(x) - v(\cdot)), 
\]
where $\Fe$ is nondecreasing in its second and third arguments, i.\ e.\
\bq\label{felliptic}
s \le t,~ u(\cdot) \le v(\cdot) \implies  \Fe(x,s,u(\cdot)) \le  \Fe(x,t,v(\cdot))
\eq
\end{definition}

\begin{definition}[Nearly Elliptic]\label{def:nearEll}
The scheme~$\Fe$ is \emph{nearly elliptic} if it can be written as
\bq\label{nearEll}
\Fe[v] = \Fm[u] + \Fp[u]
\eq
where $\Fm$ is a monotone (elliptic) scheme and $\Fp$ is a perturbation, which satisfies
\[ \lim_{\eps\to0}\|\Fp\| = 0. \]
\end{definition}

Using these definitions, we now recall the main convergence theorem from~\cite{ObermanFroeseFiltered}, which is an extension of the convergence theory of~\cite{BSnum}.  This almost monotone scheme is related to the work of Abgrall~\cite{Abgrall} which is a ``blending'' of a monotone and a higher-order scheme.  

\begin{theorem}[Convergence of Nearly Monotone Schemes~\cite{ObermanFroeseFiltered}]\label{thm:schemesConverge}
Let $u$ be the unique viscosity solution of the PDE~\eqref{FOP} and let $u^\eps$ be a stable solution of the consistent, nearly elliptic approximation scheme~\eqref{nearEll}.  Then 
\[ u^\e \to u, \quad \text{ locally uniformly,  as } \e \to 0. \]
Moreover, if the non-monotone perturbation $\Fp$ is continuous, $u^\eps$ exists and is stable.
\end{theorem}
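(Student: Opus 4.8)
The plan is to run the Barles--Souganidis half-relaxed limits argument, modified to absorb the non-monotone perturbation $\Fp$, and to close it with the comparison principle that underlies the uniqueness of $u$. Define the upper and lower half-relaxed limits
\[
\bar u(x) = \limsup_{\e\to0,\,y\to x} u^\e(y), \qquad \underline u(x) = \liminf_{\e\to0,\,y\to x} u^\e(y).
\]
Stability of the family $\{u^\e\}$ guarantees these are finite, with $\bar u$ upper semicontinuous, $\underline u$ lower semicontinuous, and $\underline u \le \bar u$ everywhere on $\bar{\OS}$. The goal is to prove the reverse inequality $\bar u \le \underline u$, which forces the two envelopes to coincide in a single continuous function that must equal $u$, and then to upgrade pointwise equality to locally uniform convergence.

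First I would show that $\bar u$ is a viscosity subsolution of $F=0$. Fix $\phi \in C^2$ touching $\bar u$ from above at a strict local maximum $x_0$. A standard argument produces points $x_\e \to x_0$ at which $u^\e - \phi$ attains a local maximum, with $\xi_\e := u^\e(x_\e) - \phi(x_\e) \to 0$. Near $x_\e$ one has $u^\e(x_\e) - u^\e(\cdot) \ge (\phi+\xi_\e)(x_\e) - (\phi+\xi_\e)(\cdot)$, so ellipticity of the monotone part (\autoref{def:Ell}) yields $\Fm[u^\e](x_\e) \ge \Fm[\phi+\xi_\e](x_\e)$. Since $u^\e$ solves the scheme, the decomposition~\eqref{nearEll} gives $\Fm[u^\e](x_\e) = -\Fp[u^\e](x_\e)$, whence
\[
\Fe[\phi+\xi_\e](x_\e) = \Fm[\phi+\xi_\e](x_\e) + \Fp[\phi+\xi_\e](x_\e) \le 2\,\|\Fp\| \to 0.
\]
Passing to the limit and invoking the $\liminf$ half of \autoref{def:consistent} gives $F_*(x_0,\phi(x_0),\grad\phi(x_0),D^2\phi(x_0)) \le 0$, the subsolution inequality. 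The symmetric argument at a local minimum of $u^\e-\phi$, using the bound $\Fe[\phi+\xi_\e](x_\e) \ge -2\|\Fp\|$ and the $\limsup$ half of consistency, shows $\underline u$ is a supersolution, with $F^* \ge 0$.

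Next I would invoke comparison. The operator $F$ in~\eqref{FOPdef} combines the \MA equation in the interior with the oblique boundary operator $\Hd$; the comparison principle for exactly this class of discontinuous, oblique problems is the one established in~\cite{barles1999nonlinear} and discussed above, which is what makes $u$ the \emph{unique} viscosity solution. Applied to the subsolution $\bar u$ and the supersolution $\underline u$, comparison gives $\bar u \le \underline u$ on $\bar{\OS}$, hence $\bar u = \underline u$. This common value is continuous and is simultaneously a sub- and supersolution, so by uniqueness it equals $u$. Coincidence of the half-relaxed limits together with the uniform stability bound is precisely the criterion for locally uniform convergence $u^\e \to u$.

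The main obstacle I expect lies in the subsolution/supersolution step rather than in the comparison: correctly handling the discontinuous operator $F$ at the boundary $\partial\OS$ --- where $F$ switches between the \MA branch and $\Hd$ --- so that the envelopes $F_*$ and $F^*$ deliver the right inequalities there, while simultaneously verifying that the perturbation genuinely vanishes at the touching points. The filtered structure is essential: ellipticity is used only for $\Fm$, whereas $\Fp$ enters solely through the uniform bound $\|\Fp\|\to0$, so no monotonicity of $\Fp$ is required. For the final assertion, when $\Fp$ is continuous the scheme is a continuous, globally bounded perturbation of a monotone scheme on a finite grid; existence of $u^\e$ then follows from a Brouwer fixed-point / topological-degree argument, and the uniform a priori bounds inherited from the monotone part supply the required stability.
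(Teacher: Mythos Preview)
The paper does not prove this theorem: it is quoted verbatim as background from~\cite{ObermanFroeseFiltered} (``we now recall the main convergence theorem from~\cite{ObermanFroeseFiltered}''), so there is no in-paper proof to compare against. Your reconstruction is the standard Barles--Souganidis half-relaxed limits argument with the non-monotone part absorbed via $\|\Fp\|\to0$, which is exactly the approach of the cited reference; the chain $\Fm[\phi+\xi_\e](x_\e)\le \Fm[u^\e](x_\e)=-\Fp[u^\e](x_\e)\le\|\Fp\|$, hence $\Fe[\phi+\xi_\e](x_\e)\le 2\|\Fp\|$, followed by consistency and comparison, is correct.
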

The filtered approximation schemes combine a monotone (elliptic) scheme~$\Fm$ with a more accurate, non-monotone scheme~$\Fa$.  These schemes, which are a particular type of nearly monotone scheme (thus they are convergent by the theorem above), have the form
\bq\label{eq:filtered} \Fe[u] = \Fm[u] + r(\epsilon) S\left (\frac{\Fa^\eps[u]-\Fm^\eps[u]}{r(\epsilon)}\right) \eq
where $r(\epsilon)\to0$ as $\epsilon\to0$.
Here the function $S$, which is called a filter, can be defined, for example, by
\bq\label{eq:filter}
S(x) = \begin{cases}
x & \norm{x} \leq 1 \\
0 & \norm{x} \ge 2\\
-x+ 2  & 1\le x \le 2 \\
-x-2  & -2\le x\le -1.
\end{cases} 
\eq
See \autoref{fig:filter}.

\begin{figure}[hbt]
\includegraphics[trim=1.6in 1.9in 2in 2.5in, clip=true] 
{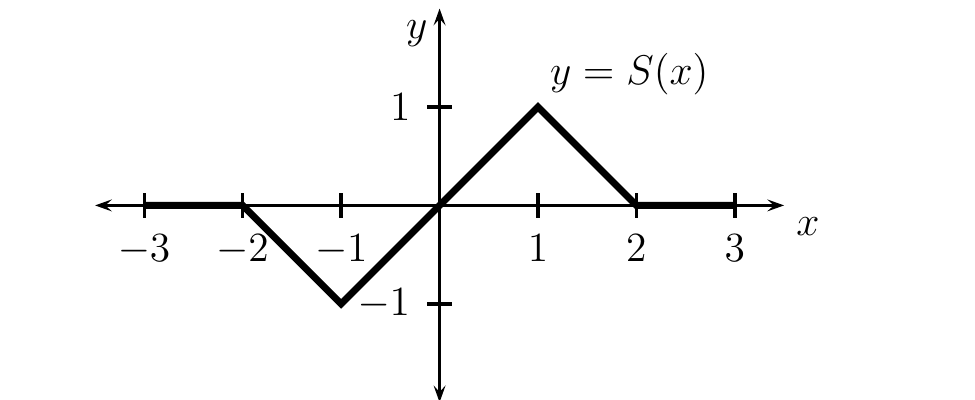}
\caption{
Filter function
}\label{fig:filter}
\end{figure}

\subsection{Discretization of \MA equation}\label{sec:discMA}
 The equation we want to solve is
\[ \det(D^2u(x)) = \rho_\OS(x)/\rho_\OT(\nabla u(x)) + \langle u\rangle. \]

We first describe the elliptic (monotone) scheme for the \MA operator, which underlies the filtered scheme.
This scheme was developed in~\cite{ObermanFroeseMATheory,FroeseTransport}.  

\begin{remark}
It is somewhat complicated to describe the discretization.  First, we combine a discretization of the \MA operator with the convexity constraint.  In addition, regularization terms are added to make the operator differentiable when Newton's method is applied.  Further, the dependence on the gradient in~\eqref{MA} means that small changes in the values of $u$ can lead to large changes in the equation.  This causes instabilities when nonconvergent schemes are used.  We also modify the discretized \MA operator to compensate for the dependence on the gradient.  We describe the modifications step by step.
\end{remark}

To begin, we use Hadamard's inequality to represent the determinant of a positive definite matrix, 
$
\det(M) \le \Pi_{i} m_{ii},
$
with equality when $M$ is diagonal.   Then, we can write
\[
\det(M) = \min 
\left \{ \Pi_{i} (O^T M O)_{ii}
\mid O^TO = I
\right \}
\]
where $O$ is an orthogonal matrix.  This last inequality, applied to the Hessian of a convex function, corresponds to taking products of second derivatives of the function along orthogonal directions,  
\[\det(D^2u) \equiv \min\limits_{\{\nu_1\ldots\nu_d\}\in V} 
\prod\limits_{j=1}^{d} 
u_{\nu_j\nu_j},\]
where $V$ is the set of all orthonormal bases for $\R^d$.

In the special case where the source density $\rho_\OS$ vanishes, the \MA operator reduces to the convex envelope operator~\cite{ObermanConvexEnvelope, ObermanCED} which corresponds to  directional convexity in each direction.  The convex envelope operator  is approximated by enforcing directional convexity in grid directions~\cite{ObermanEigenvalues,ObermanCENumerics}.
 
In the current setting, the convexity constraint is enforced by additionally replacing the directional  derivatives with their positive part.  In addition, to prevent non-convex solutions when the right-hand side vanishes, we will {also} subtract the negative parts of these second derivatives,  
\[{\det}^+(D^2u) \equiv \min\limits_{\{\nu_1\ldots\nu_d\}\in V} \left\{
\prod\limits_{j=1}^{d} 
\max\{u_{\nu_j\nu_j},0\} + \sum\limits_{j=1}^d\min\{u_{\nu_j\nu_j},0\}\right\},
\]
which is valid when $u$ is convex.
These modifications ensure that a non-convex function cannot solve our \MA equation (with non-negative right-hand side).

We discretize the operator above in two ways.  First, we make the conventional step of  replacing derivatives by finite differences.  Second, instead of considering all orientations, we replace $V$ with a finite subset which uses only  a finite number of vectors $\nu$ that lie on the grid and have a fixed maximum length.  The second discretization is called the the directional discretization, and we quantify it using $d\theta$, the angular resolution $d\theta$ of our stencil (see \autoref{fig:stencil}).  In this figure, values on the boundary are used to maintain the directional resolution $d\theta$ (at the expense of lower order accuracy in space because the distances from the reference point are not equal).  Another option is to narrower stencils as the boundary is approached, which leads to lower angular resolution, but better spatial resolution.  We denote the resulting set of orthogonal vectors by $\G$.

\begin{figure}[htdp]
	\centering
	\subfigure[]{\includegraphics[height=.4\textwidth]{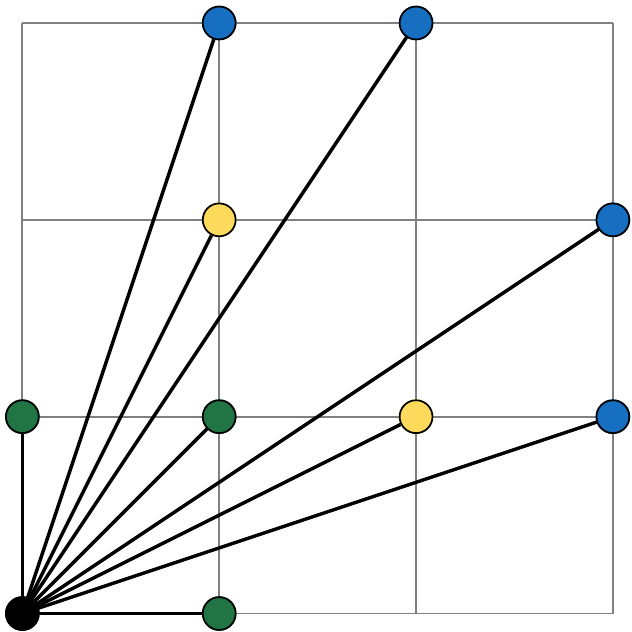}}
  \subfigure[]{\includegraphics[height=.4\textwidth]{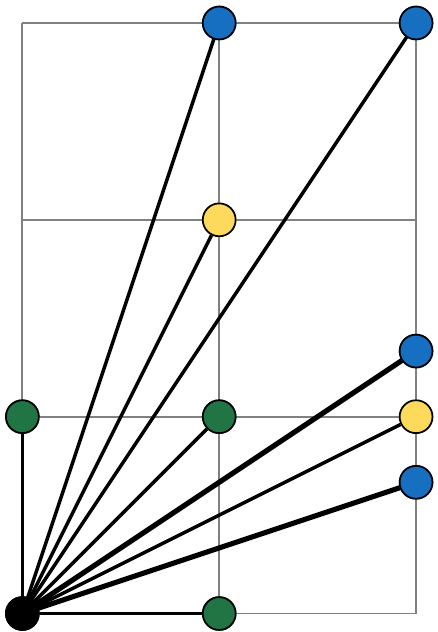}}
  \subfigure[]{\includegraphics[height=.4\textwidth]{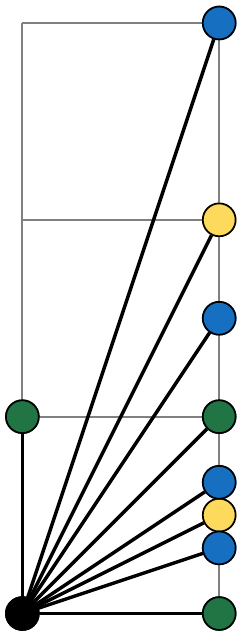}}
  	\caption{Neighboring grid points used for width one  (green), two (yellow), and three (blue) stencils.  The illustration shows the neighbors in the first quadrant.  The modification near the boundary is illustrated in the second and third figures.
}
  	\label{fig:stencil}
\end{figure}

Each of the directional derivatives in the \MA operator is  discretized using centered differences:
\[ \Dt_{\nu\nu}u_i = \frac{1}{\norm{\nu}^2h^2}\left(u(x_i + \nu h) + u(x_i - \nu h) - 2u(x_i)\right). \]

In order to handle non-constant densities, we also need to discretize the gradient.
This is accomplished using the Lax-Friedrichs scheme, which uses centered differences augmented by  a small multiple of the Laplacian to ensure monotonicity. 
\[ -\rho_\OS/\rho_\OT(\nabla u) \approx -\rho_\OS/\rho_\OT\left(\Dt_{x_1}u,\ldots,\Dt_{x_d}u\right) + \delta \sum\limits_{j=1}^d \Dt_{x_jx_j}u. \]
The centered difference discretization of the first derivatives is given by
\[ \Dt_{\nu}u_i = \frac{1}{2h}\left(u(x_i + \nu h) - u(x_i - \nu h)\right). \]
To preserve monotonicity, we require the parameter $\delta$ to satisfy 
\bq\label{deltaK}
\delta > K h,
\eq
where $K$ is the Lipschitz constant (with respect to $y$) of $\rho_\OS(x)/\rho_\OT(y)$.


Then an elliptic discretization of the \MA equation is
\bq\label{eq:MAmonDisc}
MA_M^{h,d\theta,\delta}[u] 
  = \min\limits_{(\nu_1,\ldots,\nu_d)\in \G}G_{(\nu_1,\ldots,\nu_d)}^{h,d\theta,\delta}[u]
\eq
where each of the $G_{(\nu_1,\ldots,\nu_d)}^{h,d\theta,\delta}[u]$ is defined as
\bq\label{eq:monterm}
\begin{split}
G_{(\nu_1,\ldots,\nu_d)}^{h,d\theta,\delta}[u] = &\prod\limits_{j=1}^d\max\{\Dt_{\nu_j\nu_j}u,0\} + 
\sum\limits_{j=1}^d\min\{\Dt_{\nu_j\nu_j}u,0\}\\
&-\rho_\OS(x) / \rho_\OT\left(\Dt_{x_1}u,\ldots,\Dt_{x_d}u\right) + \delta\sum\limits_{j=1}^d\Dt_{x_jx_j}u - u(x_0).
\end{split}
\eq

This monotone scheme forms the basis of the filtered scheme~\eqref{eq:filtered}.  For improved accuracy on smooth solutions, we combine {the monotone scheme} with the accurate scheme~$\Fa$, which is simply a standard centered difference discretization of the (two-dimensional) equation
\[ u_{x_1x_1}u_{x_2x_2}-u_{x_1x_2}^2 - \rho_\OS(x)/\rho_\OT(u_{x_1},u_{x_2}) - u(x_0). \]
We denote the resulting discretization by~$MA_S[u]$.
A similar discretization is easily constructed in higher dimensions by using standard finite difference discretizations for the other terms.  Additional details can be found in~\cite{ObermanFroeseFast}.

\subsection{Convergence to the viscosity solution}
We combine the almost monotone schemes for~(\ref{MA}, \ref{convex}) with the upwind, monotone scheme for~\eqref{OT1} into one equation, which we show converges to the unique convex viscosity solution of the system~\PDE.  
The combined scheme is given as
\bq\label{eq:scheme}
F^{h,d\theta,d\alpha}[u]_i = 
\begin{cases}
F_F^{h,d\theta}[u]_i & x_i \in \OS\\
\Hd^{h,d\alpha}[u]_i & x_i \in\partial\OS.
\end{cases}
\eq
In this definition, $F_F$ is the filtered scheme for the \MA equation~\eqref{eq:filtered}, which relies on the discretizations described in \autoref{sec:discMA}, and $\Hd$ is the upwind discretization of the boundary condition described in \autoref{sec:discH}.

\begin{theorem}[Convergence]\label{thm:convergence}
Let $u$ be the unique convex viscosity solution of~\PDE. 
Suppose that $\rho_\OT$ is Lipschitz continuous with convex support, as in the assumption on the densities~\eqref{DensityAss}.
Let $u^{h,d\theta,\da}$ be a solution of the finite difference scheme~\eqref{eq:scheme}.  Then $u^{h,d\theta,\da}$ converges uniformly to $u$ as $h,d\theta,\da\to0$. 
\end{theorem}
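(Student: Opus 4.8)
The plan is to cast the combined scheme~\eqref{eq:scheme} as a consistent, nearly elliptic, stable approximation of the single discontinuous operator $F$ of~\eqref{FOPdef}, and to invoke the convergence theorem for nearly monotone schemes, \autoref{thm:schemesConverge}. Thus I would reduce the statement to four claims: \textbf{(a)} the combined problem~\PDE admits a unique convex viscosity solution; \textbf{(b)} the scheme~\eqref{eq:scheme} is consistent with $F$ in the sense of \autoref{def:consistent}; \textbf{(c)} it is nearly elliptic in the sense of \autoref{def:nearEll}; and \textbf{(d)} its solutions are stable, i.e.\ uniformly bounded in $L^\infty(\bar\OS)$. Granting these, \autoref{thm:schemesConverge} gives local uniform convergence of $u^{h,d\theta,\da}$ to $u$, which is uniform since $\bar\OS$ is compact. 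Equivalently, and more explicitly, one forms the half-relaxed limits $\bar u=\limsup^* u^{h,d\theta,\da}$ and $\underline u=\liminf_* u^{h,d\theta,\da}$ (finite by (d)), uses consistency and near-ellipticity to show that $\bar u$ is a viscosity subsolution and $\underline u$ a viscosity supersolution of $F=0$ --- with the convexity constraint~\eqref{convex} inherited automatically from the ${\det}^+$ form of the interior scheme --- and concludes $\bar u\le\underline u$ by the comparison principle of~\cite{barles1999nonlinear}; since always $\underline u\le\bar u$, this forces $\bar u=\underline u=u$.

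For \textbf{(a)}, I would first record that, by \autoref{lem:distrep} together with the convexity of $u$ and of $\OT$, the global state constraint~\eqref{BV2} is equivalent to the local condition~\eqref{OT1}, so~\PDE and~\PDEL have the same solutions. Uniqueness then follows from the comparison principle for oblique boundary conditions of~\cite{barles1999nonlinear}: its obliqueness hypothesis (H1) holds with constant $C=1$ because, by~\eqref{HJoblique}, $\Hd(\grad u)$ is a supremum of affine functions of $\grad u$ each of which increases in the direction $n_x$ with slope $1$ (this is the role of \autoref{lem:directions}); the growth hypothesis (H2) holds because $\Hd$ is $1$-Lipschitz in $p$ and continuous in $x$; and properness in $u$ comes from the $\langle u\rangle$ term carried in the \MA part of $F$. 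Existence of a convex viscosity solution under~\eqref{DensityAss} is classical.

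For \textbf{(b)}--\textbf{(d)}: in the interior, $F_F^{h,d\theta}=MA_M^{h,d\theta,\delta}+r(\e)S(\cdot)$ with $r(\e)\to0$, so by \autoref{def:nearEll} it suffices that the monotone scheme $MA_M^{h,d\theta,\delta}$ of~\eqref{eq:MAmonDisc}--\eqref{eq:monterm} be elliptic and consistent. Ellipticity is built in: each $G_{(\nu_1,\ldots,\nu_d)}$ is a nondecreasing function of the differences $u(x_i)-u(x_j)$ --- the ${\det}^+$ part being a minimum of such functions via Hadamard's inequality, and the centered-difference gradient term being compensated by the Lax--Friedrichs Laplacian with $\delta>Kh$ as in~\eqref{deltaK} --- so the minimum over $\G$ is elliptic; consistency with ${\det}^+(D^2u)-\rho_\OS/\rho_\OT(\grad u)$ as $h,d\theta\to0$ is a Taylor expansion, tested against $F^*$ and $F_*$ at points of $\partial\OS$. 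On the boundary, $\Hd^{h,d\alpha}$ is the upwind discretization of \autoref{sec:discH}: by \autoref{lem:oblique} and \autoref{lem:directions}, for $h$ small the upwind stencil point $x_i-nh$ (or the compact upwind differences) stays in $\bar\OS$ for every admissible direction $n\in\Gamma_{x_i}$, so $\Hd^{h,d\alpha}$ is a supremum of monotone upwind terms, hence elliptic; consistency with $\Hd(\grad u)$ follows because, as $d\alpha\to0$, the supremum over the finite $d\alpha$-dense set of directions in~\eqref{HJoblique} converges to the full supremum. Thus~\eqref{eq:scheme} is consistent with $F$ and nearly elliptic. Finally, since the non-monotone perturbation (the filter $S$) is continuous, \autoref{thm:schemesConverge} reduces stability to stability of the underlying monotone scheme, which I would obtain by constructing discrete sub- and supersolutions --- a supersolution from a suitable quadratic (boundedness of $\OT$ bounds $\grad u$, hence $\Hd$), a subsolution from an affine function, with the additive constant pinned by the $-u(x_0)$ (equivalently $\langle u\rangle$) term --- and applying the discrete comparison principle for the monotone scheme.

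\textbf{Main obstacle.} I expect the delicate part to be the consistency and monotonicity bookkeeping at and near $\partial\OS$, where the two regimes of~\eqref{FOPdef} meet: one must simultaneously use obliqueness to keep the upwind stencil for $\Hd$ admissible as $h\to0$, handle the wide-stencil boundary modifications of the \MA operator indicated in \autoref{fig:stencil}, and verify the two consistency inequalities of \autoref{def:consistent} against the upper and lower semicontinuous envelopes $F^*$ and $F_*$ of the discontinuous operator. The second subtle point is establishing stability for a problem carrying no Dirichlet data, so that the discrete solutions are pinned down only up to the additive constant absorbed into $\langle u\rangle$; the barrier construction and its compatibility with that normalization need care.
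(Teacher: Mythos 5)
Your proposal follows essentially the same route as the paper: replace~\eqref{BV2} by the local condition~\eqref{OT1} using convexity and obliqueness, verify consistency and near-ellipticity of the combined scheme~\eqref{eq:scheme} (citing the prior work for the interior \MA discretization and the upwind construction of \autoref{sec:discH} for the boundary), and conclude via \autoref{thm:schemesConverge} together with the comparison result of~\cite{barles1999nonlinear}. Your additional detail on stability barriers and half-relaxed limits only expands steps the paper treats briefly or delegates to the cited references, so the argument is correct and matches the paper's proof.
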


\begin{proof}
We can replace \PDE by \PDEL because the viscosity solution $\grad u$ maps $\OS$ to $\OT$, in particular, $\grad u$  maps $\partial \OS$ to $\partial \OT$.  So $H(\grad u(x)) = 0$ for $x \in \partial \OS$.

From \autoref{thm:schemesConverge}, we need only verify that the scheme is consistent and nearly elliptic (nearly monotone).

Consistency and near monotonicity of the scheme for the \MA equation have been established in~\cite{ObermanFroeseMATheory,FroeseTransport}.

We recall the form of the boundary condition in~\eqref{HJoblique},\[
H(\grad u(x)) = \sup\limits_{\norm{n}=1}\{ \grad u(x) \cdot n - H^*(n) \mid n\cdot n_x > 0\}.
\]
This is discretized using forward or backward differences for the gradient, which are consistent as $h\to0$.
The supremum is further approximated by restricting to a finite subset of directions, with angular resolution $\da$.  Since the Legendre-Fenchel transform $\Hd^*(n)$ is continuous, and since these admissible directions are approximated with an accuracy on the order of $\da$, this approximation is consistent as $\da\to0$.

By exploiting the obliqueness property (Lemma~\ref{lem:directions}), we were able to construct an upwind discretization of the boundary condition, which is monotone by construction.
\end{proof}

 \section{Computational Results} 
We now provide some brief computational results to demonstrate that the approximation scheme described in this paper can be used in practice to numerically solve the optimal transportation problem.  
Full details of the numerical method and its implementation are given in a companion paper~\cite{SBVP_Num}.

For the first two examples, which admit exact analytical solutions, we provide the maximum norm of the distance between the mappings obtained from the exact and computed solutions ($u_{ex}$ and $u_{comp}$ respectively):
\[ \max\limits_{x\in\OS}\|\nabla u_{ex}(x)-\nabla u_{comp}(x)\|_2. \]

In the tables, the number of grid points used to approximate each dimension of the domain~$\OS$ and target~$\OT$ are proportional to $N_X$ and $N_Y$ respectively.
 
\begin{remark}[Reading mappings from the figures (\ref{fig:ellipse}-\ref{fig:split}) ]
The source density is embeded in a squared domain discretized by a cartesian grid and padded with zeros. We represent on the left the restriction of the grid to the support of the source and on the right , its image by the optimal map. 
 The optimal mapping can be interpreted from the figures by noting, first, that the centre of masses are mapped to each other.  Next, moving along a grid line in the source, the corresponding point in the target can be found by using monotonicity of the map: the corresponding grid point is in the same direction.  
\end{remark}

\subsection{Mapping an ellipse to an ellipse}\label{sec:exEllipse}
We consider the problem of mapping an ellipse onto an ellipse.  
To describe the ellipses, we let $M_x,M_y$ be symmetric positive definite matrices and let $B_1$ be the unit ball in $\R^d$.
Now we take $X = M_xB_1$, $Y = M_yB_2$ to be ellipses with constant densities $f$, $g$ in each ellipse.

In $\R^2$, the optimal map, which is linear, can be obtained explicitly~\cite{MOEllipse}.  It is given by 
\[ \nabla u(x) = M_yR_\theta M_x^{-1}x\]
where $R_\theta$ is the rotation matrix
\[ R_\theta = \left(\begin{array}{cc} \cos(\theta) & -\sin(\theta)\\ \sin(\theta) & \cos(\theta)\end{array}\right), \]
the angle $\theta$ is given by
\[ \tan(\theta) = \trace(M_x^{-1}M_y^{-1}J)/\trace(M_x^{-1}M_y^{-1}), \]
and the matrix $J$ is equal to
\[ J = R_{\pi/2} = \left(\begin{array}{cc} 0 & -1\\ 1 & 0\end{array}\right). \]

We use the particular example
\[ M_x = \left(\begin{array}{cc}0.8 & 0\\0 & 0.4 \end{array}\right) , 
\quad M_y = \left(\begin{array}{cc} 0.6 & 0.2\\0.2 & 0.8\end{array}\right),\]
which is pictured in \autoref{fig:ellipse}.  

Results are presented in Table~\ref{table:ellipse}, which demonstrates first order accuracy in both $N_X$ and $N_Y$.

\begin{figure}[htdp]
	\centering
	\subfigure[]{\includegraphics[width=.495\textwidth]{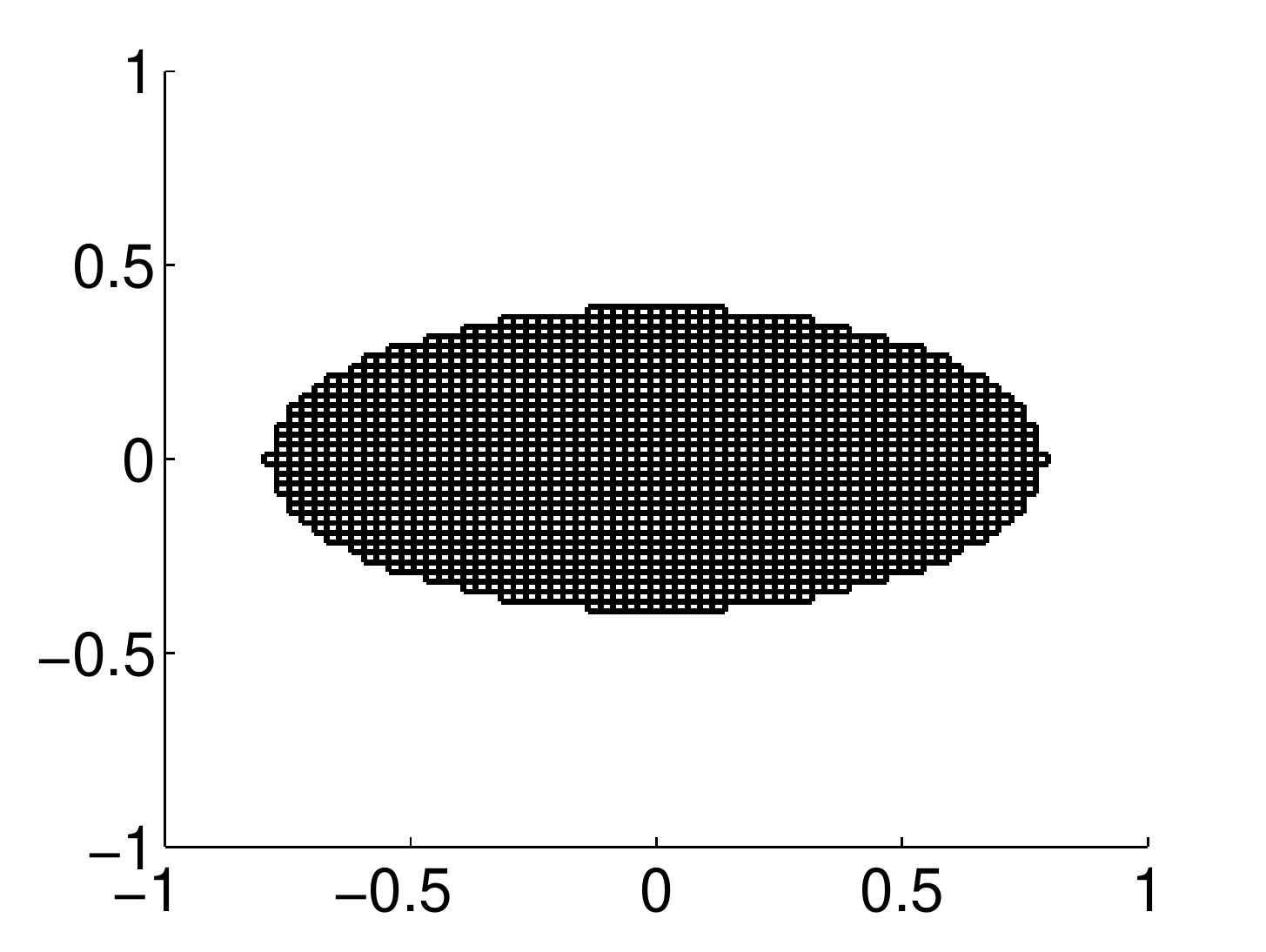}\label{fig:ellipseX}}
        \subfigure[]{\includegraphics[width=.495\textwidth]{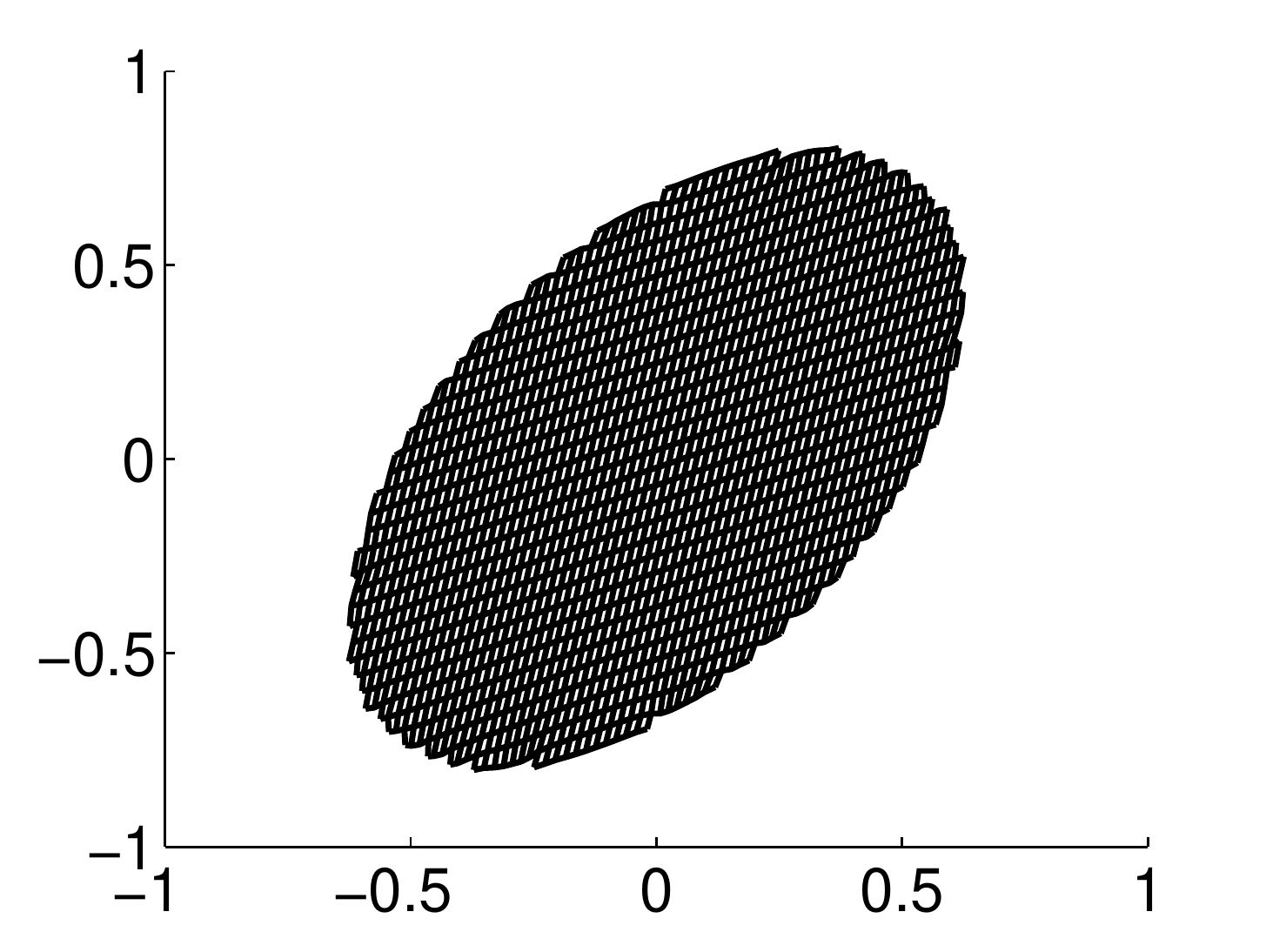}\label{fig:ellipseY}}
  	\caption{\subref{fig:ellipseX} An ellipse $\OS$ and \subref{fig:ellipseY} its image under the gradient map $\nabla u$~(\S\ref{sec:exEllipse}).}
  	\label{fig:ellipse}
\end{figure}

\begin{figure}[htdp]
	\centering
	\subfigure[]{\includegraphics[width=.49\textwidth]{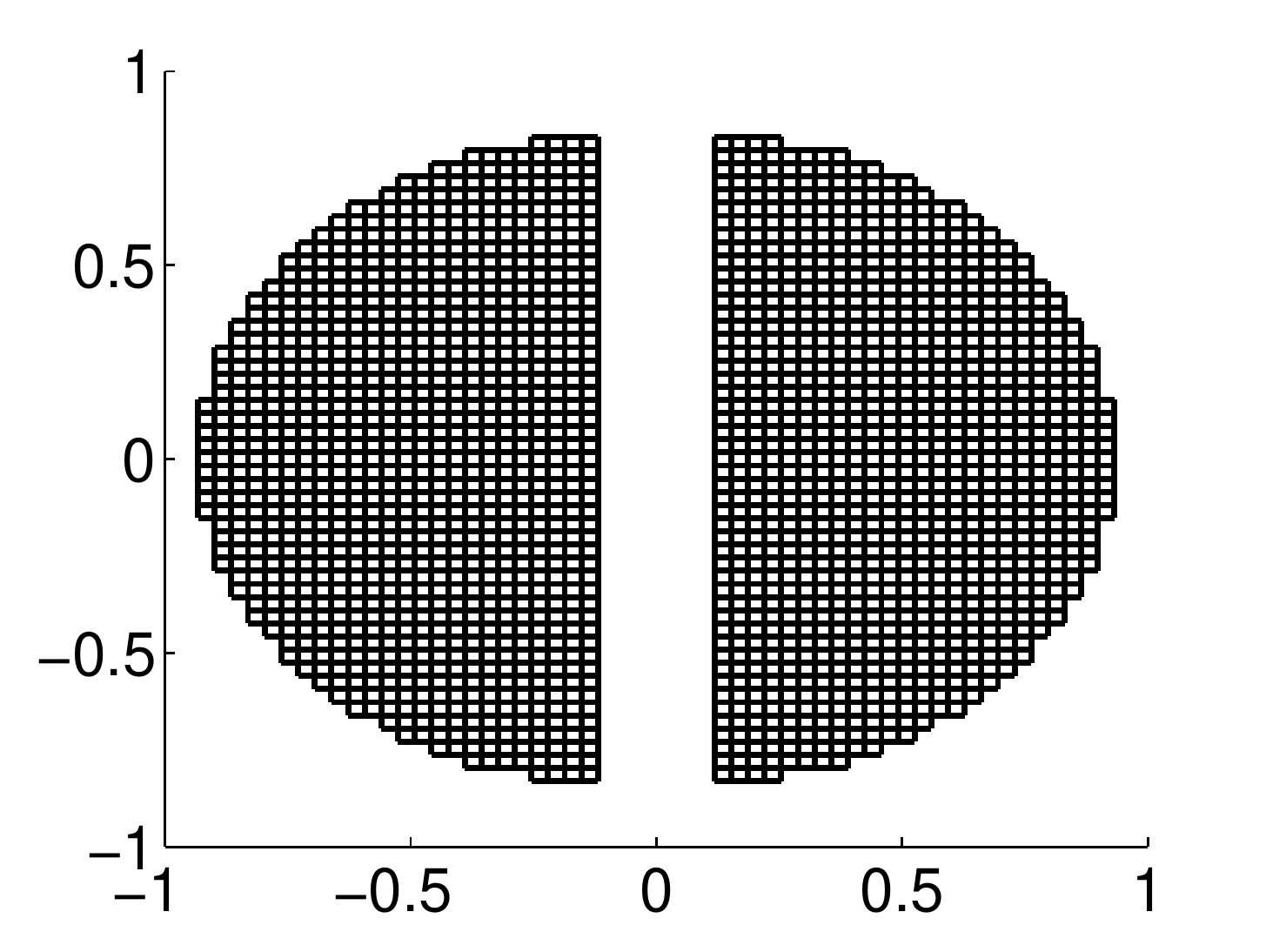}\label{fig:splitX}}
        \subfigure[]{\includegraphics[width=.49\textwidth]{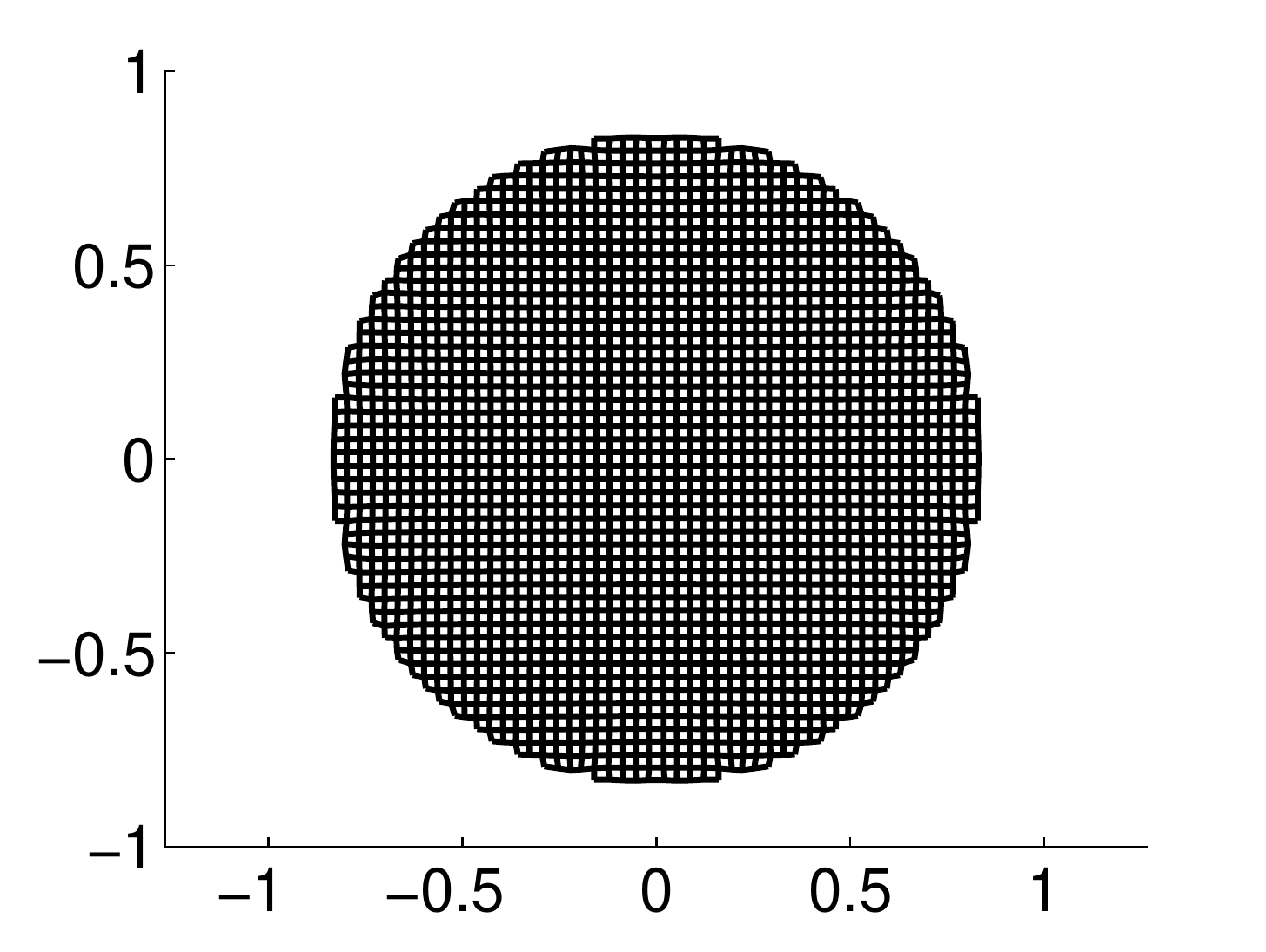}\label{fig:splitY}}
  	\caption{\subref{fig:splitX} Two half-circles $\OS$ and \subref{fig:splitY} its image under the gradient map $\nabla u$~(\S\ref{sec:exSplit}).}
  	\label{fig:split}
\end{figure}

\begin{table}[htdp]\small
\begin{center}
\begin{tabular}{c|cccccc}
 &\multicolumn{6}{c}{Maximum Error} \\
$N_X$   & \multicolumn{6}{c}{$N_Y$}  \\
    & 8 & 16 & 32 & 64 & 128 & 256\\
\hline
32 & 0.1163 & 0.0773 & 0.0693 & 0.0669 & 0.0665 & 0.0062  \\
64 & 0.1188 & 0.0403 & 0.0302 & 0.0291 & 0.0282 &  0.0283\\
128 & 0.1214 & 0.0302 & 0.0201 & 0.0174 & 0.0168 & 0.0168 \\
256 & 0.1206 & 0.0278 & 0.0116 & 0.0101 & 0.0092 &  0.0091\\
362 & 0.1175 & 0.0291 & 0.0098 & 0.0063 & 0.0057 & 0.0056 
\end{tabular}
\end{center}
\caption{Distance between exact and computed gradient maps for map from an ellipse to an ellipse. }
\label{table:ellipse}
\end{table}

\subsection{Mapping from a disconnected region}\label{sec:exSplit}
We now consider the problem of mapping the two half-circles
\begin{multline*} X = \{(x_1,x_2)\mid x_1 < -0.2,(x_1+0.2)^2+x_2^2 < 0.85^2 \} \\ \cup \{(x_1,x_2)\mid x_1 > 0.1,(x_1-0.1)^2+x_2^2 < 0.85^2 \}\end{multline*}
onto the circle
\[ Y = \{(y_1,y_2)\mid y_1^2+y_2^2<0.85^2\}.\]
This example, which is pictured in Figure~\ref{fig:split}, is degenerate since the domain is not simply connected.  Nevertheless, our method correctly computes the optimal map, as the results of \autoref{table:split} verify.

\begin{table}[htdp]\small
\begin{center}
\begin{tabular}{c|cccccc}
 &\multicolumn{6}{c}{Maximum Error} \\
$N_X$   & \multicolumn{6}{c}{$N_Y$}  \\
    & 8 & 16 & 32 & 64 & 128 & 256\\
\hline
32 & 0.0453 & 0.0267 & 0.0255 & 0.0258 & 0.0259 & 0.0258   \\
64 & 0.0397 & 0.0184 & 0.0158 & 0.0146 & 0.0144 & 0.0139  \\
128 & 0.0392 & 0.0097 & 0.0063 & 0.0066& 0.0065 & 0.0064 \\
256 & 0.0432 & 0.0110 & 0.0084 & 0.0087 & 0.0086 & 0.0073 \\
362 & 0.0448 & 0.0130 & 0.0070 & 0.0047 & 0.0045 & 0.0039 
\end{tabular}
\end{center}
\caption{Distance between exact and computed gradient maps for map from two semi-circles to a circle.  }
\label{table:split}
\end{table}

\subsection{Other geometries} 
To give a flavor of the types of geometry that can be captured by solving~\eqref{OT1} with our approximation scheme, we also provide several different computed maps in \autoref{fig:examples}.  
These were all obtained by mapping a square with uniform density onto a specified convex set, whose boundary could consist of a combination of straight and curved edges.  While no exact solutions are available, we can certainly verify that the computed gradient does map the square into the correct target set.

\begin{figure}[htdp]
	\centering
 	\subfigure[]{\includegraphics[height=.5\textwidth]{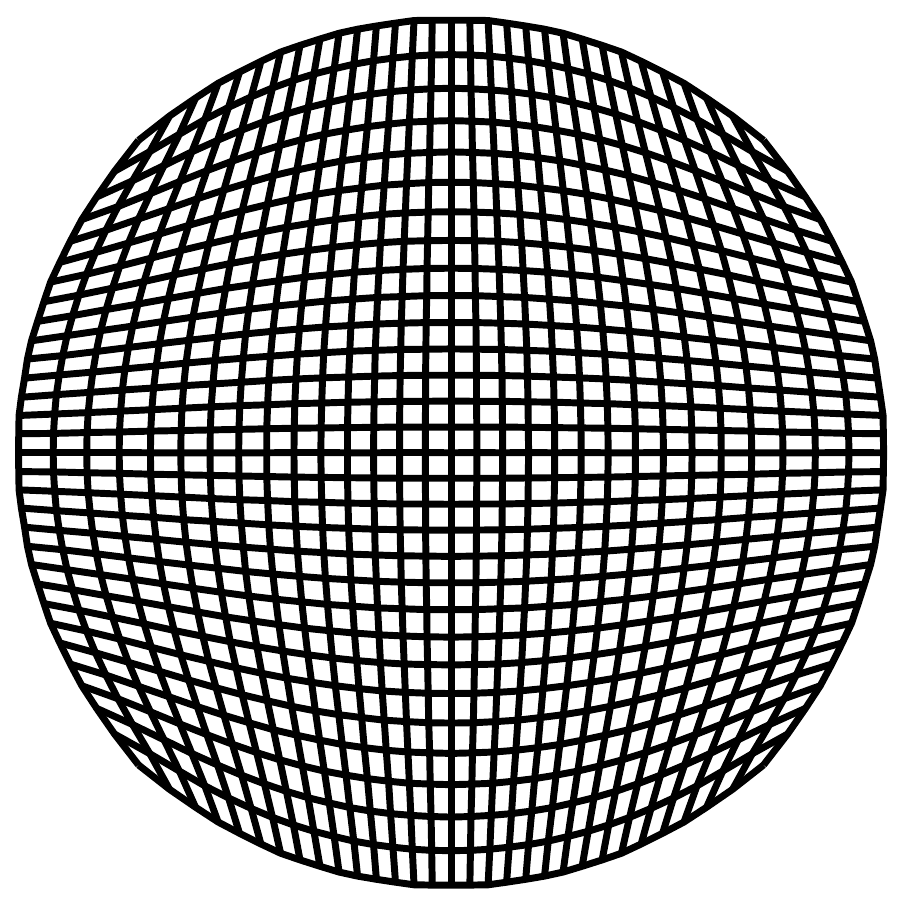}\label{fig:circle}}
  \subfigure[]{\includegraphics[height=.45\textwidth]{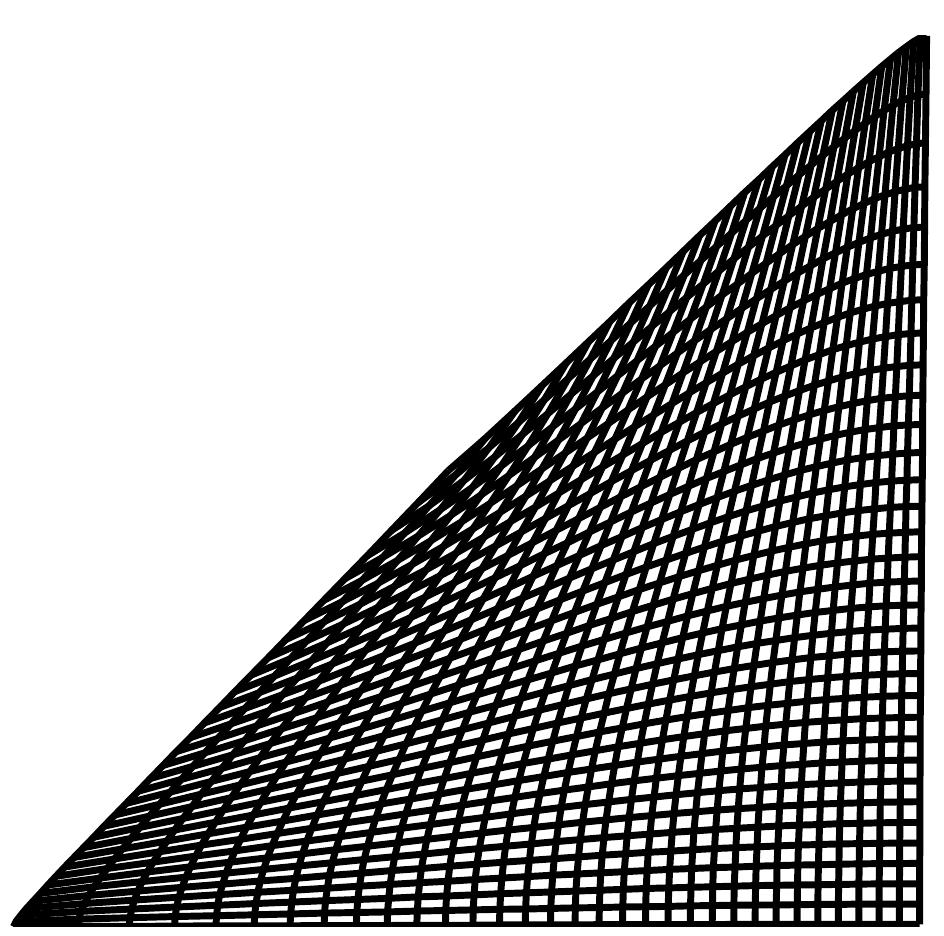}\label{fig:triangle}}
  \subfigure[]{\includegraphics[height=.5\textwidth]{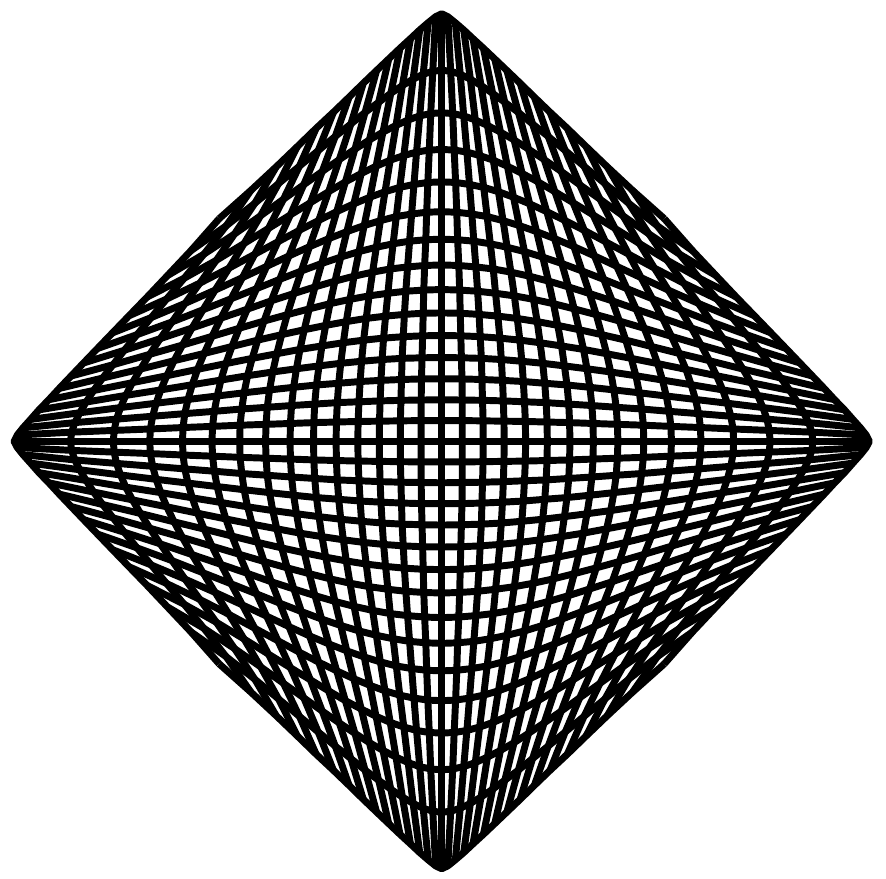}\label{fig:diamond}}
  \subfigure[]{\includegraphics[height =.5\textwidth]{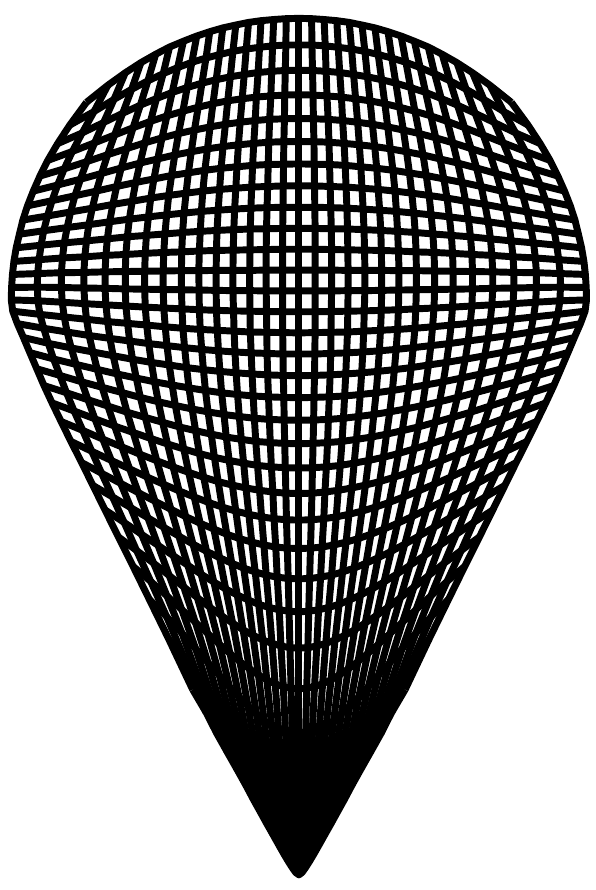}\label{fig:icecream}}
  \subfigure[]{\includegraphics[height=.35\textwidth]{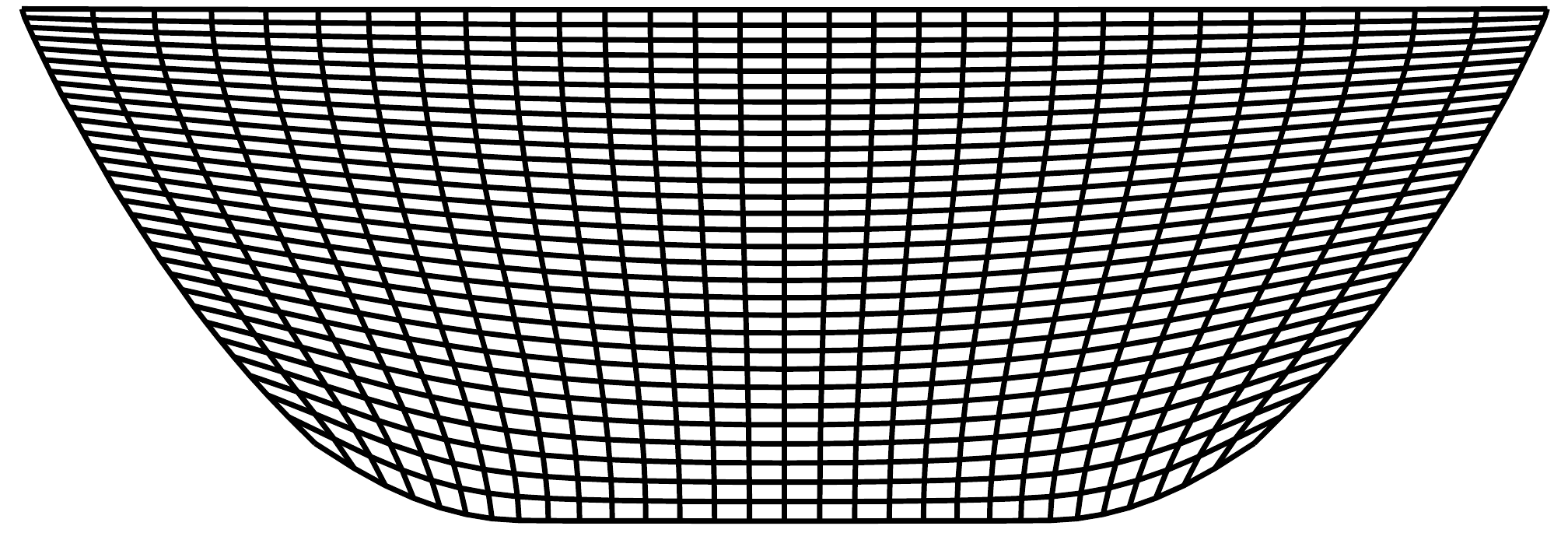}\label{fig:bowl}}
 
  	\caption{Computed maps from the uniform density on a standard orientation square to the uniform density on the illustrated target domains.}
  	\label{fig:examples}
\end{figure} 

\section{Conclusions}
A rigorous approach to finding solutions of the Optimal Transportation problem was presented, using convergent finite difference approximations to the elliptic \MA PDE with 
corresponding boundary conditions.

The target domain was represented using the signed distance function to the boundary.  This representation led to a Hamilton-Jacobi equation for the optimal transportation boundary conditions.  This Hamilton-Jacobi equation was discretized using  a monotone finite difference scheme, which relied on obliqueness conditions for the solution to build a scheme using only values of the function inside the source domain.  

The combined PDE and boundary conditions led to an nearly elliptic finite difference scheme, which was proved to converge to the unique viscosity solution of the underlying PDE. 
The convergence proof  used the theory of filtered schemes,~\cite{ObermanFroeseFiltered}, which is an extension of the monotone schemes in the Barles-Souganidis theory~\cite{BSnum}.

Computational results were presented which are consistent with the predictions of the theory.  A complete numerical implementation of the scheme, including a fast Newton solver, is presented in the companion paper~\cite{SBVP_Num}.
 
A first application is to use this method to compute JKO gradient flows~\cite{MR1617171,MR1842429}.  Another application is to OT problem with more general cost functions $c(x-y)$, using a version of the Monge-Amp\`ere PDE for $c$-convex potentials.

\bibliographystyle{plain}
\bibliography{SBVPT}

\end{document}